\newcommand{\RR}{{\mathbb R}}
\newcommand{\re}{\mathbb{R}}
\newcommand{\N}{\mathbb{N}}
\newcommand{\eps}{\epsilon}
\def\af{\alpha}
\def\rank{\mbox{rank}}
\newcommand{\sig}{\sigma}
\newcommand{\Sig}{\Sigma}
\newcommand{\reff}[1]{(\ref{#1})}
\newcommand{\pt}{\partial}
\newcommand{\prm}{\prime}
\newcommand{\mc}[1]{\mathcal{#1}}
\newcommand{\bdes}{\begin{description}}
\newcommand{\edes}{\end{description}}
\newcommand{\bal}{\begin{align}}
\newcommand{\eal}{\end{align}}
\newcommand{\bnum}{\begin{enumerate}}
\newcommand{\enum}{\end{enumerate}}
\newcommand{\bit}{\begin{itemize}}
\newcommand{\eit}{\end{itemize}}
\newcommand{\bea}{\begin{eqnarray}}
\newcommand{\eea}{\end{eqnarray}}
\newcommand{\be}{\begin{equation}}
\newcommand{\ee}{\end{equation}}
\newcommand{\baray}{\begin{array}}
\newcommand{\earay}{\end{array}}
\newcommand{\bsry}{\begin{subarray}}
\newcommand{\esry}{\end{subarray}}
\newcommand{\bca}{\begin{cases}}
\newcommand{\eca}{\end{cases}}
\newcommand{\bcen}{\begin{center}}
\newcommand{\ecen}{\end{center}}
\newcommand{\bbm}{\begin{bmatrix}}
\newcommand{\ebm}{\end{bmatrix}}
\newcommand{\bmx}{\begin{matrix}}
\newcommand{\emx}{\end{matrix}}
\newcommand{\bpm}{\begin{pmatrix}}
\newcommand{\epm}{\end{pmatrix}}
\newcommand{\btab}{\begin{tabular}}
\newcommand{\etab}{\end{tabular}}
\newtheorem{theorem}{Theorem}[section]
\newtheorem{prop}[theorem]{Proposition}
\newtheorem{lemma}[theorem]{Lemma}
\theoremstyle{definition}
\newtheorem{exm}[theorem]{Example}
\newtheorem{alg}[theorem]{Algorithm}
\newtheorem{remark}[theorem]{Remark}
\numberwithin{equation}{section}
\begin{document}

\title[Bilevel Polynomial Programs]
{Bilevel Polynomial Programs and Semidefinite Relaxation Methods}
\author{Jiawang Nie}
\address{Department of Mathematics,
	University of California, 9500 Gilman Drive, La Jolla, CA, USA, 92093.}
\email{njw@math.ucsd.edu}
\author{Li Wang}
\address{Department of Mathematics, Statistics, and Computer Science, University of Illinois at Chicago, Chicago, IL, USA, 60607.}
\email{liwang8@uic.edu}
\author{Jane J. Ye}
\address{Department of Mathematics and Statistics, University of Victoria, Victoria, B.C., Canada, V8W 2Y2.}
\email{janeye@uvic.ca}

\subjclass[2010]{65K05, 90C22, 90C26, 90C34}

\keywords{bilevel polynomial program, Lasserre relaxation,
Fritz John condition, Jacobian representation, exchange method,
semi-infinite programming}

\begin{abstract}
A bilevel program is an optimization problem whose constraints
involve the solution set to another
optimization problem parameterized {by} upper level variables.
This paper studies bilevel polynomial programs (BPPs), i.e.,
all the functions are polynomials.
We reformulate BPPs equivalently as semi-infinite polynomial programs (SIPPs),
using Fritz John conditions and Jacobian representations.
Combining the exchange technique and Lasserre type semidefinite relaxations,
we propose a numerical method for solving bilevel polynomial programs.
For simple BPPs, we prove the convergence
to global optimal solutions.  Numerical experiments
are presented to show the efficiency of the proposed algorithm.
\end{abstract}

\maketitle

\section{Introduction}

We consider the {\it bilevel polynomial program} (BPP):
\be \label{bilevel:pp}
(P): \left\{
\begin{aligned}
F^* := \min\limits_{x\in \re^n,y\in \re^p}&\ F(x,y) \\
\text{s.t.} \quad &\ G_i(x,y)\geq 0, \, i=1,\cdots,m_1, \\
& \  y\in S(x),
\end{aligned}
\right.
\ee
where $F$ and all $G_i$ are real polynomials
in $(x,y)$, and $S(x)$ is the set of global minimizers
of the following lower level program,
which is parameterized by $x$,
\be \label{df:P(x)}
 \quad
\min\limits_{ z\in \re^p} \quad f(x,z)  \quad
\text{s.t.} \quad g_j(x,z) \geq 0,\, j=1,\cdots, m_2.
\ee
In \reff{df:P(x)}, $f$ and each $g_j$ are polynomials in $(x,z)$.
For convenience, denote
\[
{
Z(x) :=\{ z \in \re^p \mid  g_j(x,z) \geq 0,\, j=1,\cdots, m_2 \},
}
\]
the feasible set of \reff{df:P(x)}.
The inequalities $G_i(x,y) \geq 0$ are called upper (or outer) level constraints,
while $g_j(x,z)\geq 0$ are called lower (or inner) level constraints.
When $m_1=0$ (resp., $m_2=0$),
there are no upper (resp., lower) level constraints.
Similarly, $F(x,y)$ is the upper level (or outer) objective,
and $f(x,z)$ is the lower level (or inner) objective.
Denote the set
\be \label{set:U}
\mathcal{U}: =\left\{(x,y)
\left|\baray{c}
G_i(x,y)\geq 0 \, (i=1,\cdots, m_1), \, \\
g_j(x,y) \geq 0\, (j=1,\cdots, m_2)
\earay \right.
\right\}.
\ee
Then the feasible set of $(P)$ is the intersection
\be \label{set:feasible:P}
\mathcal{U}\cap \{(x,y): y\in S(x)\}.
\ee
Throughout the paper, we assume that for all $(x,y)\in \mathcal{U}$,
$S(x)\neq \emptyset$ and consequently the feasible set of $(P)$ is nonempty.
When the lower level feasible set $Z(x)\equiv Z$ is independent of $x$,
we call the problem $(P)$ a {\it simple bilevel polynomial program} (SBPP).
The SBPP is not {mathematically simple
but actually quite challenging}.
SBPPs have important applications in economics,
e.g., the moral hazard model of the principal-agent problem \cite{mirrlees1999theory}.
When {the feasible set of the lower level program} $Z(x)$ depends on $x$, the problem $(P)$ is called a
{\it general bilevel polynomial program} (GBPP). GBPP
is also an effective modelling tool for many applications in various fields;
see e.g. \cite{Dempebook,Dempebook2} and the references {therein}.

\subsection{Background}

The bilevel program is a class of difficult optimization problems.
Even for the case where all the functions are linear,
the problem is NP-hard  \cite{ben1990computational}.
A general approach for solving bilevel programs is to
transform them into single level optimization problems.
A commonly used technique is to replace the {lower level  program}
by its Kurash-Kuhn-Tucker (KKT) conditions.
When the lower level program {involves} inequality constraints,
the reduced problem becomes a so-called
  {\it mathematical program with equilibrium constraints} (MPEC) \cite{luo1996mathematical,opac-b1102982}.
If the {lower level  program} is nonconvex,
the  optimal solution of a bilevel program may not even be a stationary point of
the reduced single level optimization {problem} by using the KKT conditions.
This was shown by a counter example due to Mirrlees~\cite{mirrlees1999theory}.
Moreover, even if the lower level  {program} is convex,
{it was shown in
\cite{Dempe2012MP} that a local solution to
the MPEC obtained by replacing the lower level program
by its KKT conditions may not be a local solution to
the original bilevel program.
Recently, \cite{allende2013solving} proposed to replace the lower level  program with its Fritz John conditions instead of its KKT conditions. However, it was shown in \cite{Dempe2016} that {the} same difficulties remain, i.e., solutions to the MPEC obtained by replacing the lower level program by its Fritz John conditions may
{not be} the solutions to the original bilevel program.}

An alternative approach for solving BPPs is to use the value function
\cite{outrata1990numerical,Jane1995},
which gives an equivalent reformulation.
%
%
However, the optimal solution of the bilevel
{program} may not be a stationary point of the value function reformulation.
To overcome this difficulty, {\cite{YeSIAM2010} proposed to combine
the  KKT and the value function reformulations.}
Over the past two decades, many numerical algorithms were proposed
for solving bilevel programs. However, most of them assume that
the {lower level  program} is convex, with few exceptions {\cite{SBLYe2013,Mitsos2006Thesis,
mitsos2008global,outrata1990numerical,xu2013smoothing,xu2014smoothing,XuYeZhang}.
In \cite{Mitsos2006Thesis,mitsos2008global},
an algorithm using the branch and bound in combination
with the exchange technique was proposed to find approximate global optimal solutions.
Recently, the smoothing techniques were used to
find stationary points of the valued function or
the combined reformulation of simple bilevel programs
\cite{SBLYe2013,xu2013smoothing,xu2014smoothing,XuYeZhang}. }

%
%

In general, it is quite difficult to find global {minimizers} of nonconvex optimization problems. However, when the functions are polynomials, there exists much work
on computing global optimizers,
by using Lasserre type semidefinite relaxations \cite{Las01}.
We refer to \cite{Las09,ML2014} for the recent work in this area.
Recently, Jeyakumar, Lasserre, Li and Pham \cite{Lasbilevel2015}
worked on simple bilevel polynomial programs.
When the lower level  program \reff{df:P(x)} is convex for each fixed $x$,
they transformed \reff{bilevel:pp} into a single level polynomial program,
by using Fritz John conditions and the multipliers
to replace the lower level  program, and
globally solving it by using Lasserre type  relaxations.
When \reff{df:P(x)} is nonconvex for some $x$,
by approximating the value function of lower level programs
by a sequence of polynomials, they propose to reformulate
\reff{bilevel:pp} with approximate lower level programs
by the value function approach,
and globally solving the resulting sequence of polynomial programs
by using Lasserre type relaxations.
The work \cite{Lasbilevel2015} is very inspiring,
because polynomial optimization techniques were proposed to solve BPPs.
In this paper, we also use Lasserre type semidefinite relaxations
to solve BPPs, but we make different reformulations,
by using Jacobian representations
and the exchange technique in semi-infinite programming.

\subsection{From BPP to SIPP}

A bilevel {program}  can be reformulated as a
{\it semi-infinite program} (SIP). Thus, the classical methods
(e.g., the exchange method {\cite{Infinitely1976,Hettich1993,WangSIPP2014}}) for SIPs can be applied to solve bilevel programs.
For convenience of {introduction}, at the moment, we consider SBPPs,
i.e., the feasible set $Z(x)\equiv Z$ in \reff{df:P(x)} is independent of $x$.

Before reformulating BPPs as SIPs, we show the fact:
\be \label{df:H(xyz)}
y\in S(x)  \Longleftrightarrow y\in Z, \quad\, H(x,y,z)
\geq 0 \,\, (\forall~z\in Z),
\ee
where $H(x,y,z) := f(x,z) - f(x,y)$.
Clearly, the ``$\Rightarrow$" direction is true.
Let us prove {the reverse direction}. Let $v(x)$ denote the value function:
\be \label{def:v(x)}
v(x) \,:= \,  \inf_{z\in Z }  f(x,z).
\ee
If $(x,y)$ satisfies the right hand side conditions in \reff{df:H(xyz)},
then
\[
\inf\limits_{z\in Z} \, H(x,y,z)  =v(x) -f(x,y)\geq 0.
\]
Since $y\in Z$, we have
$v(x)-f(x,y)\leq 0$. Combining these two inequalities, we get
\[
v(x)={\inf_{z\in Z}} f(x,z) = f(x,y)
\]
and hence $ y\in S(x).$

By the fact \reff{df:H(xyz)}, the problem $(P)$ is equivalent to
\be \label{bilevel:sipp:reform}
(\widetilde{P}): \left\{
\baray{rl}
{F}^* := \min\limits_{x \in \re^n, \, y\in {Z}} &  F(x,y) \\
\text{s.t.}& \ G_i(x,y)\geq 0,\ i=1,\dots, m_1, \\
& \ H(x,y,z)\geq 0,~\forall~ z\in Z.
\earay
\right.
\ee
The problem $(\widetilde{P})$ is a semi-infinite polynomial program (SIPP),
if the set $Z$ is infinite.
Hence, the exchange method can be used to solve $(\widetilde{P})$.
Suppose $Z_k$ is a finite grid of $Z$.
Replacing $Z$ by $Z_k$ in $(\widetilde{P})$, we get:
\be \label{pop:tilde(Pk)}
(\widetilde{P}_k): \left\{
\begin{aligned}
{F}^*_k := \min\limits_{x \in \re^n, \, y\in {Z} }&\ F(x,y) \\
\text{s.t.} &\ G_i(x,y)\geq 0,\ i=1,\dots, m_1, \\
&\ H(x,y,z)\geq 0, ~\forall~ z\in Z_k .
\end{aligned}
\right.
\ee
{The feasible set of $(\widetilde{P}_k)$
contains that of $(\widetilde{P})$}. Hence,
\[
{F}^*_k \leq {F}^*.
\]
Since $Z_k$ is a finite set, $(\widetilde{P}_k)$
is a polynomial optimization problem. If, for some $Z_k$,
we can get an optimizer $(x^k,y^k)$ of $(\widetilde{P}_k)$ such that
\be \label{v(xk)-f(xkyk)}
v(x^k) - f(x^k,y^k) \geq 0,
\ee
then $y^k\in S(x^k)$ and $(x^k,y^k)$ is feasible for $(\widetilde{P})$.
In such case, $(x^k,y^k)$ must be a global optimizer of $(\widetilde{P})$.
Otherwise, if \reff{v(xk)-f(xkyk)} fails to hold,
then there exists $z^{k}\in Z$ such that
\[
f(x^k, z^k) - f(x^k, y^k) < 0.
\]
For such a case, we can construct the new grid set as
\[
Z_{k+1} := Z_k\cup \{z^{k}\},
\]
and then solve the new problem $(\widetilde{P}_{k+1})$
with the grid set $Z_{k+1}$. Repeating this process,
we can get an algorithm for solving $(\widetilde{P})$ approximately.

How does the above approach work in computational practice?
Does it converge to global optimizers?
Each subproblem $(\widetilde{P}_k)$ is a polynomial optimization problem,
which is generally nonconvex.
Theoretically, it is NP-hard to solve polynomial optimization globally.
However, in practice, it can be solved successfully
by Lasserre type semidefinite relaxations (cf.~\cite{Las01,Las09}).
Recently, it was shown in \cite{NiefiniteLas} that
Lasserre type semidefinite relaxations are generally
tight for solving polynomial optimization problems.
About the convergence,
we can see that $\{ F_k^* \}$ is a sequence of monotonically
increasing lower bounds for the global optimal value $F^*$, i.e.,
\[
F_1^* \leq  \cdots \leq F_k^*  \leq {F}^*_{k+1} \leq \cdots \leq {F}^*.
\]
By a standard analysis for SIP (cf.~\cite{Hettich1993}),
one can expect the convergence $F_k^*  \to F^*$, under some conditions.
However, we would like to point out that
the above exchange process typically converges
{\it very slowly} for solving BPPs. A major reason is that
the feasible set of $(\widetilde{P}_k)$ is {\it much {larger}} than
that of $(\widetilde{P})$. Indeed, the dimension of the feasible set of
$(\widetilde{P}_k)$ is typically larger than that of $(\widetilde{P})$.
This is because, for every feasible $(x,y)$ in $(\widetilde{P})$,
$y$ must also satisfy optimality conditions for the lower level
 {program} \reff{df:P(x)}. In the meanwhile, the $y$ in $(\widetilde{P}_k)$
does not satisfy such optimality conditions.
Typically, for $(\widetilde{P}_k)$ to approximate $(\widetilde{P})$
reasonably well, the grid set $Z_k$ should be {\it very big}.
In practice, the above standard exchange method is not efficient
for solving BPPs.

\subsection{Contributions}

In this paper, we propose an efficient computational method for solving BPPs.
First, we transform a BPP into an equivalent SIPP,
by using Fritz John conditions and Jacobian representations.
Then, we propose a new algorithm for solving BPPs,
by using the exchange technique and Lasserre type
semidefinite relaxations.

For each $(x,y)$ that is feasible for \reff{bilevel:pp},
$y$ is a minimizer for the lower level program \reff{df:P(x)}
parameterized by $x$.
If some constraint qualification conditions are satisfied,
the KKT conditions hold. If such qualification conditions fail to hold,
the KKT conditions might not be satisfied. However,
the Fritz John conditions always hold  for \reff{df:P(x)}
(cf.~\cite[\S3.3.5]{Bertsekas1990}
{and
 \cite{bental1976JOTA} for optimality conditions
for convex programs without constraint qualifications).
}
So, we can add the Fritz John conditions to $(\widetilde{P})$,
while the problem is not changed.
A disadvantage of using Fritz John conditions
is the usage of multipliers, which need to be considered as new variables.
Typically, using multipliers will make the
polynomial program much harder to solve, because of
new additional variables.
To overcome this difficulty, the technique in \cite[\S2]{Nie2011Jacobian}
can be applied to avoid the usage of multipliers.
This technique is known as Jacobian representations
for optimality conditions.
%
%


The above observations motivate us to solve
bilevel polynomial programs, by combining Fritz John conditions,
Jacobian representations, Lasserre relaxations,
and the exchange technique.
Our major results are as follows:

\begin{itemize}


\item Unlike some prior methods for solving BPPs,
we do not assume the KKT conditions hold for the {lower level  program} \reff{df:P(x)}.
Instead, we use the Fritz John conditions.
This is because the KKT conditions may fail to hold for the {lower level  program} \reff{df:P(x)},
while the Fritz John conditions always hold.
By using Jacobian representations, the usage of multipliers can be avoided.
This greatly improves the computational efficiency.
%
%

\item For simple bilevel polynomial programs,
we propose an algorithm using
Jacobian representations, Lasserre relaxations
and the exchange technique.
Its convergence to global minimizers is proved.
The numerical experiments show that it
is efficient for solving SBPPs.

\item For general bilevel polynomial programs,
we can apply the same algorithm, using
Jacobian representations, Lasserre relaxations
and the exchange technique. The numerical experiments show that it
works well for some GBPPs, while it is not theoretically guaranteed to
get global optimizers. However,
its convergence to global optimality
can be proved under some assumptions.

\end{itemize}

The paper is organized as follows: In Section~\ref{sc:prlm}, we review
some preliminaries in polynomial optimization and Jacobian representations.
In Section~\ref{sc:sbpp}, we propose a method for solving
simple bilevel polynomial programs and prove its convergence.
In Section~\ref{sec:GBPP}, we consider general bilevel polynomial programs
and show how the algorithm works.
In Section~\ref{sc:numexp}, we present numerical experiments to demonstrate
the efficiency of the proposed methods.
{
In Section~\ref{sc:condis}, we make some conclusions
and discussions about our method.}

\section{Preliminaries}
\label{sc:prlm}

\noindent {\bf Notation.}
The symbol $\N$ (resp., $\re$ ,$\mathbb{C}$) denotes the set of nonnegative integers (resp., real numbers, complex numbers). For an integer $n>0$, $[n]$ denotes the set $\{1,\cdots,n\}$.
%
%
For $x := (x_1, \ldots, x_n) $ and $\af := (\af_1, \ldots, \af_n)$,
denote the monomial
\[
  x^\af \, := \, x_1^{\af_1}\cdots x_n^{\af_n}.
\]
For a finite set $T$, $|T|$ denotes its cardinality.
The symbol $\mathbb{R}[x] := \mathbb{R}[x_1,\cdots,x_n]$
denotes the ring of polynomials in
$x:=(x_1,\cdots,x_n)$ with real coefficients whereas
$\mathbb{R}[x]_k$ denotes its subspace of polynomials of degree at most $k$.
For a polynomial $p\in \RR[x]$,  {define the set product
\[
 p\cdot \RR[x] : =\{pq \mid  q\in \RR[x]\}.
\]
It is the principal ideal generated by $p$}. For a symmetric matrix
$W$, $W\succeq 0$ (resp., $\succ 0$) means that $W$ is positive
semidefinite (resp., definite). For a vector $u\in \re^n$, $\| u \|$
denotes the standard Euclidean norm.
The gradient of a function $f(x)$ is denoted as $\nabla f(x)$.
If $f(x,z)$ is a function in both $x$ and $z$,
then $\nabla_z f(x,z)$ denotes the gradient with respect to $z$.
For an optimization problem, {\tt argmin} denotes
the set of its optimizers.

\subsection{Polynomial optimization}\label{Poly}

An ideal $I$ in $\re[x]$ is a subset of $\re[x]$
such that $ I \cdot \re[x] \subseteq I$
and $I+I \subseteq I$. For a tuple $p=(p_1,\ldots,p_r)$ in $\re[x]$,
$I(p)$ denotes the smallest ideal containing all $p_i$, i.e.,
\[
I(p) = p_1 \cdot \re[x] + \cdots + p_r \cdot \re[x].
\]
The $k$th {\it truncation} of the ideal $I(p)$,
denoted as $I_k(p)$, is the set
\[
p_1 \cdot \re[x]_{k-\deg(p_1)} + \cdots + {p_r}  \cdot \re[x]_{k-\deg(p_r)}.
\]
For the polynomial tuple $p$, denote its real zero set
\begin{align*}
\mc{V}(p)  := \{v \in \re^n \mid \,  p(v) = 0 \}.
\end{align*}

A polynomial $\sig \in \re[x]$ is said to be a sum of squares (SOS)
if $\sig = a_1^2+\cdots+ a_k^2$ for some $a_1,\ldots, a_k \in \re[x]$.
The set of all SOS polynomials in $x$ is denoted as $\Sig[x]$.
For a degree $m$, denote the truncation
\[
\Sig[x]_m := \Sig[x] \cap \re[x]_m.
\]
For a tuple $q=(q_1,\ldots,q_t)$,
its {\it quadratic module} is the set
\[
Q(q):=  \Sig[x] + q_1 \cdot \Sig[x] + \cdots + q_t \cdot \Sig[x].
\]
The $k$-th truncation of $Q(q)$ is the set
\[
\Sig[x]_{2k} + q_1 \cdot \Sig[x]_{d_1} + \cdots + q_t \cdot \Sig[x]_{d_t}
\]
where each $d_i = 2k - \deg(q_i)$. For the tuple $q$,
denote the basic semialgebraic set
\[
\mc{S}(q) := \{ v \in \re^n \mid  q(v) \geq 0 \}.
\]

For the polynomial tuples $p$ and $q$ as above,
if $f \in I(p) + {Q(q)}$, then clearly $f \geq 0$ on the set
$\mc{V}(p) \cap \mc{S}(q)$. However, the reverse is not necessarily true.
The sum $I(p) + Q(q)$ is said to be {\it archimedean}
if there exists $b \in I(p) + Q(q)$ such that
$S(b) = \{ v \in \re^n: b(v) \geq 0\}$ is {a} compact set in $\re^n$. {Putinar \cite{Putinar1993} proved that
if a polynomial $f > 0$ on $\mc{V}(p) \cap \mc{S}(q)$ and if $I(p) + Q(q)$ is archimedean},
then $f  \in I(p) + Q(q)$. When $f$ is only nonnegative
(but not strictly positive) on $\mc{V}(p) \cap \mc{S}(q)$, we still have
$f  \in I(p) + Q(q)$, under some general conditions
(cf.~\cite{NiefiniteLas}).

Now, we review Lasserre type semidefinite relaxations in polynomial optimization.
More details can be found in \cite{Las01,Las09,ML2014}.
Consider the general polynomial optimization problem:
\be \label{pro::opti}
\left\{\baray{rl}
f_{\min}:= \quad \underset{x\in\RR^n}{\min} &  f(x)  \\
\text{s.t.} &  p(x) = 0, \, q(x) \geq 0,
\earay \right.
\ee
where $f \in \re[x]$ and $p,q$ are tuples of polynomials.
The feasible set of \reff{pro::opti}
is precisely the intersection $\mc{V}(p) \cap \mc{S}(q)$.
The Lasserre's hierarchy of semidefinite relaxations
for solving \reff{pro::opti} is ($k=1,2,\ldots$):
\be \label{k:SOS:las}
\left\{\baray{rl}
f_k := \, \max  & \gamma  \\
\text{s.t.}  &  f-\gamma \in I_{2k}(p) + Q_k(q).
\earay \right.
\ee
When the set $I(p) + Q(q)$ is archimedean, Lasserre proved the convergence
\[
f_k \quad \to \quad f_{\min},\quad \text{as }k\rightarrow \infty.
\]
{If there exist $k<\infty$ such that $f_k =f_{\min}$,
the Lasserre's hierarchy is said to have finite convergence.
{Under the archimedeanness and some standard  conditions
in optimization known to be generic (i.e., linear independence constraint qualification, strict complementarity and second order sufficiency conditions)},
the Lasserre's hierarchy has finite convergence.
This was recently shown in \cite{NiefiniteLas}.
{On the other hand, there exist special polynomial optimization problems
for which the Lasserre's hierarchy fails to have finite convergence.
But, such special problems belong to a set of measure zero
in the space of input polynomials, as shown in \cite{NiefiniteLas}.}
Moreover, we can also get global minimizers of
\reff{pro::opti} by using the flat extension or flat truncation condition
(cf.~\cite{Nie2011certify}).
The optimization {problem} \reff{k:SOS:las} can be solved as
a semidefinite program, so it can be solved by semidefinite program packages
(e.g., SeDuMi \cite{sedumi}, SDPT3 \cite{sdpt3}).
A convenient and efficient software for using
Lasserre relaxations is {\tt GloptiPoly~3} \cite{Gloptipoly}.

\subsection{Jacobian representations}

We consider the polynomial optimization problem
that is similar to the {lower level  program} \reff{df:P(x)}:
\be \label{pro::opti:2}
 \underset{z\in\RR^p}{\min} \quad f(z) \quad
\text{s.t.} \quad g_1(z)\ge 0, \ldots, g_m(z) \geq 0,
\ee
where $f, g_1, \ldots, g_m \in\RR[z]:=\re[z_1,\ldots,z_p]$.
Let $Z$ be the feasible set of \reff{pro::opti:2}.
For $z\in Z$, let $J(z)$ denote the index set of
active constraining functions at $z$.

%
%
%

Suppose $z^*$ is an optimizer of \reff{pro::opti:2}.
By the Fritz John condition (cf.~\cite[\S3.3.5]{Bertsekas1990}), there exists
{$ (\mu_0,\mu_1, \ldots, \mu_m) \ne 0 $} such that
\be \label{FJ:cond}
\mu_0\nabla f(z^*) -\sum\limits_{i=1}^m\mu_i \nabla g_i(z^*) =0,
\quad \mu_ig_i(z^*)=0 \ (i\in [m]).
\ee
{A point like $z^*$ satisfying \reff{FJ:cond}  is called a Fritz John point.}
If we only consider active constraints, the above is then reduced to
\be \label{act:FJ:cd}
\mu_0\nabla f(z^*) -\sum\limits_{i\in J(z^*)}\mu_i \nabla g_i(z^*) =0.
\ee
The condition~\reff{FJ:cond} uses multipliers $\mu_0, \ldots, \mu_m$,
which are often not known in advance. If we consider them as new variables,
then it would increase the number of variables significantly.
For the index set $J = \{ i_1, \ldots, i_k\}$, denote the matrix
\[
B[J,z]:= \bbm \nabla f(z) & \nabla g_{i_1} (z) & \cdots
& \nabla g_{i_k} (z) \ebm.
\]
Then condition~\reff{act:FJ:cd} means that the matrix
$B[J(z^*), z^*]$ is rank deficient, i.e.,
\[
\rank \, B[J(z^*), z^*] \, \leq \, | J(z^*)|.
\]
The matrix $B[J(z^*), z^*]$ depends on the active set $J(z^*)$,
which is typically unknown in advance.

The technique in \cite[\S2]{Nie2011Jacobian} can be applied to
get explicit equations for Fritz John points,
without using multipliers $\mu_i$.
 For a subset $J=\{i_1, \ldots, i_k\} \subseteq [m]$
with cardinality $|J| \leq \min\{m,p-1\}$,
write its complement as $J^c := [m]\backslash J.$ Then
\[
B[J,z] \mbox{ is rank defincient } \Longleftrightarrow \mbox{ all $(k+1) \times (k+1)$ minors of $B[J,z]$ are zeros}.
\]
%
%
There are totally $\binom{p}{k+1}$ equations defined by {such} minors.
However, this number can be significantly reduced by
using the method in \cite[\S2]{Nie2011Jacobian}.
The number of equations, for characterizing that $ B[J,z] \mbox{ is rank defincient}$,
can be reduced to
\[
\ell(J) \, := \, p(k+1)-(k+1)^2+1.
\]
It is much smaller than $ p \choose k+1$.
For cleanness of the paper, we do not repeat the construction of
these minimum number defining polynomials.
Interested readers {are referred to
\cite[\S2]{Nie2011Jacobian} for the details.
List all the defining polynomials,
which make $ B[J,z]$ {rank deficient}, as
\be \label{jacob:monior}
\eta^J_1, \ldots,  \eta^J_{\ell(J)}.
\ee\noindent}Consider the products of
these polynomials with {$g_j$'s}:
	\be \label{eta:dot:g}
	\eta^J_1 \cdot \Big( \underset{ j \in J^c}{\Pi}  g_j  \Big), \quad \ldots, \quad
	\eta^J_{\ell(J)} \cdot \Big( \underset{ j \in J^c}{\Pi} g_j \big).
\ee
They are all polynomials in $z$. The active set $J(z)$
is undetermined, unless $z$ is known.
We consider all possible polynomials as in (\ref{eta:dot:g}),
for all $J \subseteq [m]$, and collect them together.
For convenience of notation, denote all such polynomials as
\be \label{jac:poly:psi}
	\psi_1, \quad \ldots, \quad \psi_{L},
\ee
where the number
\begin{align*}
L \,\,&= \, \sum\limits_{J \subseteq [m],|J| \leq \min\{m,p-1\} }
\ell(J) \\
& =  \sum\limits_{ 0 \leq k \leq \min\{m,p-1\} }
\binom{m}{k} \big( p(k+1) - (k+1)^2 +1 \big).
\end{align*}
When $m,k$ are big, the number $L$ would be very large.
This is an unfavorable feature of Jacobian representations.

%
%

We point out that the Fritz John points can be characterized
by using the polynomials $\psi_1, \ldots, \psi_L$.
Define the set of all Fritz John points:
\be \label{KFJ:pts}
K_{FJ} := \left\{z\in \re^p  \left|
\baray{c}
 \exists (\mu_0,\mu_1, \ldots, \mu_m) \ne 0, \,
 \mu_ig_i(z) = 0 \,(i\in [m]),  \\
 \mu_0 \nabla f(z)  -  \sum\limits_{i=1}^m\mu_i \nabla g_i(z)=0.
\earay
\right. \right\}.
\ee
Let $W$ be the set of real zeros of polynomials $\psi_j(z)$, i.e.,
\be \label{len2}
W  \, =  \, \{z\in \re^p \mid \psi_1(z) = \cdots = \psi_L(z)=0\}.
\ee
{It is interesting to note that the sets $K_{FJ}$ and $W$ are equal.}

%

\begin{lemma}\label{lemma:gcp}
For $K_{FJ}, W$ as in \reff{KFJ:pts}-\reff{len2},
it holds that $K_{FJ} = W$.
\end{lemma}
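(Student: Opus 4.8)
The plan is to prove the two inclusions separately, in each case relating the Fritz John condition at a point $z$ to the rank deficiency of $B[J(z),z]$, where $J(z):=\{i\in[m]: g_i(z)=0\}$ denotes the active index set at $z$, and then matching that rank deficiency with the vanishing of the polynomials $\psi_\ell$ built from the block $J=J(z)$.

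For $K_{FJ}\subseteq W$, I would start from a Fritz John point $z$ with multipliers $(\mu_0,\dots,\mu_m)\ne 0$. Complementarity forces $\mu_i=0$ for every inactive index $i\notin J(z)$, so the gradient identity collapses to $\mu_0\nabla f(z)-\sum_{i\in J(z)}\mu_i\nabla g_i(z)=0$ with $(\mu_0,(\mu_i)_{i\in J(z)})\ne 0$; padding by zeros, $B[J,z]$ is rank deficient for \emph{every} $J\supseteq J(z)$. Now take any generator $\psi_\ell$, coming from some index set $J$ with $|J|\le\min\{m,p-1\}$, say $\psi_\ell=\eta^{J}_k\cdot\prod_{j\in J^c}g_j$. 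If $J\not\supseteq J(z)$, choose $j^*\in J(z)\setminus J\subseteq J^c$; then $g_{j^*}(z)=0$ annihilates the product, so $\psi_\ell(z)=0$. If instead $J\supseteq J(z)$, then $B[J,z]$ is rank deficient, hence $\eta^{J}_k(z)=0$ by the defining property of the $\eta$'s, and again $\psi_\ell(z)=0$. Either way $z\in W$.

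For $W\subseteq K_{FJ}$, suppose $\psi_1(z)=\cdots=\psi_L(z)=0$, and distinguish two cases according to $|J(z)|$. If $|J(z)|\ge p$, then $B[J(z),z]$ has $p$ rows and $|J(z)|+1\ge p+1$ columns, so its columns $\nabla f(z),\ \nabla g_i(z)\ (i\in J(z))$ are linearly dependent for purely dimensional reasons; reading off the coefficients (with appropriate signs) and padding by zeros on the inactive indices produces a nonzero $(\mu_0,\dots,\mu_m)$ with $\mu_0\nabla f(z)-\sum_i\mu_i\nabla g_i(z)=0$ and $\mu_ig_i(z)=0$ for all $i$, i.e.\ $z\in K_{FJ}$. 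If $|J(z)|\le p-1$, then $J:=J(z)$ is among the index sets used in the construction, and the generators attached to it are $\eta^{J}_k\cdot\prod_{j\in J^c}g_j$; every $j\in J^c$ is inactive so $\prod_{j\in J^c}g_j(z)\ne 0$, and therefore $\psi_\ell(z)=0$ forces $\eta^{J}_1(z)=\cdots=\eta^{J}_{\ell(J)}(z)=0$, which by construction means $B[J(z),z]$ is rank deficient; extracting a linear dependence among its columns and padding by zeros gives $z\in K_{FJ}$ as before.

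Most of this is just bookkeeping once the active set $J(z)$ is in play; the one point needing a little attention is the dichotomy in the second inclusion, since when $|J(z)|\ge p$ the set $J(z)$ is too big to occur among the $\eta^J$ constructions, but in that regime rank deficiency of $B[J(z),z]$ is automatic, so the argument still closes. Throughout I treat as a black box the cited fact that the simultaneous vanishing $\eta^{J}_1(z)=\cdots=\eta^{J}_{\ell(J)}(z)=0$ is equivalent to $B[J,z]$ being rank deficient.
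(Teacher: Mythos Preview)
Your proof is correct and follows essentially the same approach as the paper's: both directions hinge on comparing an arbitrary index set $J$ appearing in the construction against the active set $J(z)$, using that $J\not\supseteq J(z)$ kills the product $\prod_{j\in J^c}g_j$ while $J\supseteq J(z)$ forces $B[J,z]$ to be rank deficient, and in the reverse direction splitting on whether $|J(z)|\ge p$ (automatic linear dependence) or $|J(z)|\le p-1$ (so $J(z)$ itself appears among the index sets). Your write-up is in fact slightly cleaner than the paper's, which unnecessarily separates out the case $J(u)=\emptyset$ in the $K_{FJ}\subseteq W$ direction, whereas your padding argument handles it uniformly.
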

\begin{proof} First, we prove that $W\subseteq K_{FJ}$.
Choose an arbitrary $u\in W$, and let $J(u)$ be the active set at $u$.
If $|J(u)| \geq p$, then the gradients $\nabla f(u)$
and $\nabla g_j(u)$ ($j\in J(u)$) must be linearly dependent,
so $u \in K_{FJ}$. Next, we suppose $|J(u)| < p$.
Note that $g_j(u)>0$ for all $j\in J(u)^c$.
By the construction, some of $\psi_1, \ldots, \psi_L$
are the polynomials as in \reff{eta:dot:g}
\[
{\eta_t^{J(u)}} \cdot \Big( \underset{ j \in J(u)^c}{\Pi}  g_j  \Big).
\]
Thus, $\psi(u) =0$ implies that all the polynomials {$\eta_t^{J(u)}$}
vanish at $u$. By their definition, we know the matrix
$B[J(u),u]$ does not have full column rank.
This means that $u\in K_{FJ}$.

%
%
%
%

Second, we show that $K_{FJ} \subseteq W$.
Choose an arbitrary $u\in K_{FJ}$.
\bit

\item Case I: $J(u)=\emptyset$.
Then $\nabla f(u)=0$. The first column of matrices {$B[\emptyset,u]$} is zero,
so all $\eta_t^\emptyset$ and $\psi_j$
vanishes at $u$ and hence $u\in W$.

\item Case II: $J(u) \not =\emptyset$. Let $I\subseteq [m]$
be an arbitrary index set with $|I|\leq \min\{m,p-1\}$.
If $J(u)\not \subseteq I$, then at least one $j\in I^c$ belongs to $J(u)$.
Thus, at least one $j \in I^c$ satisfies $g_j(u)=0$, so all the polynomials
\[
{\eta_t^{I}} \cdot \Big( \underset{ j \in I^c}{\Pi} g_j  \Big)
\]
vanish at $u$.
If $J(u) \subseteq I$, then $\mu_i g_i(u)=0$ implies that
$\mu_i=0$ for all $i\in I^c$.
By definition of $K_{FJ}$, the matrix {$B[I,u]$}
does not have full column rank.
So, the minors $\eta_i^I$ of {$B[I,u]$} vanish at $u$.
By the construction of $\psi_i$, we know all $\psi_i$ vanish at $u$,
so $ u\in W$.
\eit
The proof is completed by combining the above two cases.
\end{proof}

\section{Simple bilevel polynomial programs}
\label{sc:sbpp}

In this section, we study simple bilevel polynomial programs (SBPPs)
and give an algorithm for computing global optimizers.
For SBPPs as in \reff{bilevel:pp}, the feasible set $Z(x)$
for the {lower level  program} \reff{df:P(x)} is independent of $x$.
Assume that $Z(x)$ is constantly the semialgebraic set
\be \label{Z:sbpp}
Z:=\{z\in \re^p \mid g_1(z) \geq 0, \ldots, g_{m_2}(z) \geq 0\},
\ee
for given polynomials $g_1, \ldots, g_{m_2}$ in $z:=(z_1,\ldots, z_p)$.
For each pair $(x,y)$ that is feasible in \reff{bilevel:pp},
$y$ is an optimizer for \reff{df:P(x)} which now becomes
\be \label{inopt:minfg}
\min\limits_{ z\in \re^p} \quad f(x,z)  \quad
\text{s.t.} \quad g_1(z) \geq 0, \ldots, g_{m_2}(z) \geq 0.
\ee
Note that the inner objective $f$ still depends on $x$.
So, $y$ must be a Fritz John point of \reff{inopt:minfg}, i.e.,
there exists {$ (\mu_0,\mu_1, \ldots, \mu_{m_2}) \ne 0 $} satisfying
\[
\mu_0 \nabla_z f(x,y) - \sum_{j \in [m_2] } \mu_j \nabla_z g_j(y) = 0,
\quad \mu_j g_j(y) = 0 \, (j \in [m_2]).
\]
Let $K_{FJ}(x)$ denote the set of all Fritz John points of \reff{inopt:minfg}.
The set $K_{FJ}(x)$ can be characterized by Jacobian representations.
Let $\psi_1, \ldots, \psi_L$ be the polynomials constructed as in
\reff{jac:poly:psi}. Note that each $\psi_j$
is now a polynomial in $(x,z)$, because the objective of
\reff{inopt:minfg} depends on $x$. Thus, each $(x,y)$
feasible for \reff{bilevel:pp} satisfies
\[
\psi_1(x,y) = \cdots = \psi_L(x,y) = 0.
\]
For convenience of notation, denote the polynomial tuples
\be \label{sc3:xi:psi}
\xi :=\big( G_1, \ldots, G_{m_1}, g_1, \ldots, g_{m_2} \big), \quad
\psi := \big( \psi_1, \ldots, \psi_L \big),
\ee
{We call $\psi(x,y)=0$} a {\it Jacobian equation}.
Then, the SBPP as in \reff{bilevel:pp} is equivalent to
the following SIPP:
\be \label{sipp:FsGH}
\left\{
\baray{rl} F^* := \min\limits_{x \in \re^n, y \in \re^p}  & \, F(x,y) \\
\text{s.t.} & \, \psi(x,y) = 0, \, \xi(x,y) \geq 0, \\
& \, H(x,y,z) \geq 0, \, \forall~z\in Z.
\earay \right.
\ee
In the above, $H(x,y,z)$ is defined as in \reff{df:H(xyz)}.

\subsection{A semidefinite algorithm for SBPP}

We have seen that the SBPP \reff{bilevel:pp} is equivalent to
\reff{sipp:FsGH}, which is an SIPP. So, we can apply
the {exchange} method to solve it.
The basic idea of ``exchange" is that
we replace $Z$ by a finite grid set $Z_k$ in \reff{sipp:FsGH},
and then solve it for a global minimizer $(x^k, y^k)$ by Lasserre relaxations.
If $(x^k, y^k)$ is feasible for \reff{bilevel:pp}, we stop;
otherwise, we compute global minimizers of $H(x^k,y^k, z)$ and
add them to $Z_k$. Repeat this process until the convergence condition is met.
We call $(x^*,y^*)$ a global minimizer of \reff{bilevel:pp},
up to a tolerance parameter $\eps>0$, if $(x^*,y^*)$
is a global minimizer of the following approximate SIPP:
\be \label{pop:tilde(Pepsilon)}
 \left\{
\baray{rl}
{F}^*_\epsilon := \min\limits_{x \in \re^n ,y \in \re^p} &  F(x,y) \\
\text{s.t.} \quad & \, \psi(x,y) = 0, \, \xi(x,y) \geq 0, \\
& \ H(x,y,z)\geq -\epsilon, ~\forall~ z\in Z.
\earay
\right.
\ee
Summarizing the above, we get the following algorithm.

\begin{alg}\label{alg:exchange:method}
(A Semidefinite Relaxation Algorithm for SBPP.)

\medskip
\noindent\textbf{Input:} Polynomials $F$, $f$,
$G_1,\ldots, G_{m_1}$, $g_1,\ldots, g_{m_2}$
for the SBPP as in \reff{bilevel:pp},
a tolerance parameter $\eps \geq 0$, and a maximum number $k_{\max}$
{of iterations}.

\medskip
\noindent\textbf{Output:} The set $\mc{X}^*$ of
global minimizers of \reff{bilevel:pp}, up to the tolerance $\eps$.

\medskip
\begin{enumerate}[\itshape Step 1]
%
%

\item Let $Z_0=\emptyset$, $\mc{X}^*= \emptyset$ and $k=0$.

\item
\label{the Jacobian representation}
Apply Lasserre  relaxations to solve
\be \label{alg:simple:sub1}
(P_k): \left\{
\baray{rl}
 F_k^* :=\min\limits_{x \in \re^n, y \in \re^p} &\, F(x,y)\\
\text{s.t.} &\, \psi(x,y) = 0, \, \xi(x,y) \geq 0, \\
&\, H(x,y,z)\geq 0 \,  (\forall~z\in Z_k),
\earay
\right.
\ee
and get the set
$S_k = \{(x^k_1,y_1^k),\cdots,(x^k_{r_k},y^k_{r_k})\}$
of its global minimizers.

\item \label{step3}
For each $i=1,\cdots,r_k$, do the following:
\begin{enumerate}[\upshape (a)] \label{item::step3}

\item Apply Lasserre relaxations to solve
\be  \label{alg:simple:sub2}
(Q^k_i):\quad \left\{
\baray{rl}
 v^k_i := \min\limits_{z \in \re^p} &\  H(x^k_i,y^k_i,z)\\
 \text{s.t.}&\, \psi(x_i^k, z) = 0, \\
 & \, g_1(z) \geq 0, \ldots, g_{m_2}(z) \geq 0,
\earay
\right.
\ee
and get the set $T^k_i=\left\{z^k_{i,j} :\ j=1,\cdots,t^k_i\right\}$
of its global minimizers.

\item
If $v^k_i \geq - \eps$,
then update
$\mathcal{X}^* : =\mathcal{X}^* \cup \{(x^k_i,y_i^k)\}$.
\end{enumerate}

\item
If $\mathcal{X}^*\neq \emptyset$ or $k>k_{\max}$, stop;
otherwise, {update $Z_{k}$ to $Z_{k+1}$ as}
\be \label{Z:k+1:sbpp}
Z_{k+1} :=Z_{k} \cup T^k_1 \cup \cdots \cup T^k_{r_k}.
\ee
Let $k:=k+1$ and go to Step
\ref{the Jacobian representation}.
\end{enumerate}
\end{alg}

For the exchange method  to solve the SIPP \reff{sipp:FsGH} successfully,
the two subproblems~\reff{alg:simple:sub1} and \reff{alg:simple:sub2}
need to be solved globally in each iteration.
This can be done by Lasserre's hierarchy of
semidefinite relaxations (cf.~\S\ref{Poly}).
\bit
\item [A)]
For solving \reff{alg:simple:sub1} by Lasserre's hierarchy, we get a sequence of
monotonically increasing lower bounds for $F_k^*$, say,
$\{ \rho_\ell \}_{\ell=1}^\infty$, that is,
\[
\rho_1 \leq \cdots \leq \rho_\ell \leq \cdots  \leq F_k^*.
\]
Here, $\ell$ is a relaxation order.
If for some value of $\ell$
we get a feasible point $(\hat{x}, \hat{y})$ for \reff{alg:simple:sub1}
such that $F(\hat{x}, \hat{y}) = \rho_\ell$, then we must have
\be \label{eq:F(xy)=Fk}
F(\hat{x}, \hat{y}) = F_k^* = \rho_\ell,
\ee
and know $(\hat{x}, \hat{y})$ is a global minimizer.
This certifies that the Lasserre's relaxation of order $\ell$ is exact
and \reff{alg:simple:sub1} is solved globally,
i.e., Lasserre's hierarchy has finite convergence.
As recently shown in \cite{NiefiniteLas},
Lasserre's hierarchy has finite convergence,
when the archimedeanness and {some standard  
conditions well-known in optimization to be generic
(i.e., linear independence constraint qualification,
strict complementarity and second order sufficiency conditions}) hold.

\item [B)]{
For a given polynomial optimization problem, there exist a sufficient (and almost necessary)
condition for detecting whether or not Lasserre's hierarchy has finite convergence.
The condition is {\it flat truncation}, proposed in \cite{Nie2011certify}.
It was proved in \cite{Nie2011certify} that
Lasserre's hierarchy has finite convergence
if the flat truncation condition
is satisfied.
When the flat truncation condition holds,
we can also get the point
$(\hat{x}, \hat{y})$ in \reff{eq:F(xy)=Fk}.
In all of our numerical examples, the flat truncation condition
is satisfied, so we know that Lasserre relaxations
solved them exactly.}
{There exist special optimization problems
for which Lasserre relaxations are not exact (see e.g. \cite[Chapter 5]{Las09}).
Even for the worst case that
Lasserre's hierarchy fails to have finite convergence,
flat truncation is still the right condition for checking asymptotic convergence.}
This is proved in \cite[\S3]{Nie2011certify}.

\item [C)]
In computational practice, semidefinite programs cannot be solved
exactly, because round-off errors always exist in computers.
Therefore, if $F(\hat{x}, \hat{y}) \approx \rho_\ell$,
it is reasonable to claim that
\reff{alg:simple:sub1} is solved globally.
This numerical issue is a common feature of most computational methods.

\item [D)]
For the same reasons as above, the subproblem~\reff{alg:simple:sub2}
can also be solved globally by Lasserre's relaxations.
Moreover, \reff{alg:simple:sub2} uses the equation
$\psi(x_i^k,z)=0$, obtained from Jacobian representation.
As shown in \cite{Nie2011Jacobian}, Lasserre's hierarchy of relaxations,
in combination with Jacobian representations, always has finite convergence,
under some nonsingularity conditions.
This result has been improved in \cite[Theorem 3.9]{GuoWang2014}
under weaker conditions.
Flat truncation can be used to detect
the convergence (cf.~\cite[\S4.2]{Nie2011certify}).

\item [E)]
{
For all $\epsilon_{1}> \epsilon_2 >0$, it is easy to see that
$F_{\epsilon_1}^*\leq F_{\epsilon_2}^*\leq F^*$ and hence the feasible region and the optimal value of the bilevel problems are monotone. Indeed, we can prove
$\lim\limits_{\epsilon\rightarrow 0^+}~F_{\epsilon}^*=F^* $
and the continuity of the optimal solutions;
see \cite[Theorem 4.1]{SBLYe2013} for the result and a detailed proof. 
However, we should point out that if $\epsilon >0$ is not small enough, then the solution of the approximate bilevel program may be very different from the one for the original bilevel program.
We refer to \cite[Example 4.1]{Mitsos2006Thesis}.
}

\item [F)] In Step~3 of Algorithm~\ref{alg:exchange:method},
the value of $v_i^k$ is a measure for the feasibility of
$(x_i^k, y_i^k)$ in \reff{sipp:FsGH}.
This is because $(x_i^k, y_i^k)$ is a feasible point for \reff{sipp:FsGH}
if and only if $v_i^k \geq 0$.
By using the exchange method,
the subproblem \reff{alg:simple:sub1} is only an
approximation for \reff{sipp:FsGH},
so typically we have $v_i^k < 0$ {if $(x_i^k, y_i^k)$ is infeasible for \reff{sipp:FsGH}}. The closer $v_i^k$ is to zero, the better \reff{alg:simple:sub1} approximates \reff{sipp:FsGH}.
\eit

\subsection{Two features of the algorithm}
As in the introduction, we do not apply the exchange method directly
to \reff{bilevel:sipp:reform}, but instead to \reff{sipp:FsGH}.
Both \reff{bilevel:sipp:reform} and \reff{sipp:FsGH}
are SIPPs that are equivalent to the SBPP \reff{bilevel:pp}.
As the numerical experiments will show, the SIPP \reff{sipp:FsGH}
is much easier to {solve} by the exchange method.
This is because, {the Jacobian equation} $\psi(x,y)=0$ in \reff{sipp:FsGH}
makes it much easier for \reff{alg:simple:sub1}
to approximate \reff{sipp:FsGH} accurately.
Typically, for a finite grid set $Z_k$ of $Z$,
the feasible sets of \reff{sipp:FsGH} and \reff{alg:simple:sub1}
have the same dimension. However, the feasible set of
\reff{bilevel:sipp:reform} has smaller dimension than that of \reff{pop:tilde(Pk)}.
Thus, it is usually very difficult for \reff{pop:tilde(Pk)} to approximate
\reff{bilevel:sipp:reform} accurately, by choosing a finite set $Z_k$.
In contrast, it is often much easier for \reff{alg:simple:sub1} to approximate
\reff{sipp:FsGH} accurately. We illustrate this fact
by the following example.

\begin{exm} \label{exm:appendix:9new}
(\cite[Example 3.19]{BIEXM})
Consider the SBPP:
\be  \label{exm:comparison}
\left\{
\baray{rl}
\min\limits_{x\in \RR,y\in \RR} &~~ F(x,y): = xy-y+\frac{1}{2}y^2\\
\text{s.t.} &  1-x^2 \geq 0, \quad 1 - y^2 \geq 0, \\
&  ~~ 
y\in S(x) :=  \underset{1-z^2 \geq 0}{\tt argmin} \quad
f(x,z): = -xz^2+\frac{1}{2}z^4.
\earay
\right.
\ee
Since $f(x,z) =  \frac{1}{2}(z^2-x)^2-\frac{1}{2}x^2$, one can see that
\[
S(x)=\left\{
\begin{aligned}
& 0,\quad \quad x\in [-1,0),\\
& \pm \sqrt{x}, \quad x\in [0,1].
\end{aligned}
\right.
\]
Therefore, the outer objective $F(x,y)$ can be expressed as
\[
F(x,y)=\left\{
\begin{aligned}
& 0,\quad \quad \quad \quad\quad \quad \quad x\in [-1,0),\\
& \frac{1}{2}x\pm (x-1)\sqrt{x}, \quad x\in [0,1].
\end{aligned}
\right.
\]
So, the optimal solution and the optimal value of \reff{exm:comparison}
are ($a = \frac{\sqrt{13}-1}{6}$):
\[
(x^*,y^*) = (a^2,a) \approx (0.1886, \, 0.4343),
\quad F^* = \frac{1}{2}a^2+a^3-a  \approx  -0.2581.
\]
If Algorithm \ref{alg:exchange:method}
is applied without using the Jacobian {equation} $\psi(x,y)=0$,
the computational results are shown in Table \ref{Table:comparision:1}.
%
%
The problem \reff{exm:comparison} cannot be solved reasonably well. In the contrast,  if we apply Algorithm \ref{alg:exchange:method}
with the Jacobian {equation} $\psi(x,y)=0$, then \reff{exm:comparison}
is solved very well. The computational results are shown in
Table~\ref{Table:comparision:2}.
It takes only two iterations for the algorithm to converge.
\begin{table}[htb]
\caption{Computational results without $\psi(x,y) =0$}
\label{Table:comparision:1}
	\centering
	\begin{scriptsize}
		\begin{tabular}{|c|c|c|c|c|c|c|c|} \hline
			{\tt Iter} $k$   & $(x_i^k,y_i^k)$  & $z_{i,j}^k$ & $F_k^*$ &  $v_i^k$   \\ 	\hline
			0  & (-1,1)  & 4.098e-13 & -1.5000 & -1.5000 \\
			\hline
			1  &  (0.1505, 0.5486)  &  $\pm 0.3879$ &  -0.3156  &  -0.0113  \\
			\hline
			2  & (0.0752, 0.3879) & $\pm 0.2743$ & -0.2835& -0.0028\\
			\hline
			3  & (0.2088, 0.5179) & $\pm 0.4569 $ & -0.2754 & -0.0018\\
			\hline
            4  & cannot be solved & ... & ... & ... \\ \hline
		\end{tabular}
	\end{scriptsize}
\end{table}
\begin{table}[htb]
\caption{Computational results with $\psi(x,y) =0$ }
\label{Table:comparision:2}
	\centering
	\begin{scriptsize}
		\begin{tabular}{|c|c|c|c|c|c|c|c|} \hline
			{\tt Iter} $k$   & $(x_i^k,y_i^k)$  & {$z_{i,j}^k$} & $F_k^*$ &  $v_i^k$   \\ 	\hline
			0  & (-1,1)  & 3.283e-21 & -1.5000 & -1.5000 \\
			\hline
			1  &  (0.1886,0.4342)  &  $\pm 0.4342$ &  -0.2581  &  -3.625e-12 \\ \hline
		\end{tabular}
	\end{scriptsize}
\end{table}
\end{exm}
%
%

For the lower level program \reff{df:P(x)},
the KKT conditions may fail to hold.
In such a case, the classical methods
which replace \reff{df:P(x)} by the KKT conditions,
do not work at all. However, such problems can also be
solved efficiently by Algorithm~\ref{alg:exchange:method}.
The following are two such examples.


\begin{exm} (\cite[Example 2.4]{Dempe2012MP}) Consider the following SBPP:
	\begin{equation}\label{exm:comparsion:1}
	F^*:=\min\limits_{x\in \RR,y\in \RR} (x-1)^2+y^2\quad \text{s.t.}\quad
y\in S(x) := \underset{ z\in Z := \{z\in \RR| z^2\leq 0\}}{\tt argmin}~x^2z.
	\end{equation}
It is easy to see that the global minimizer of this problem is
$(x^*,y^*) = (1,0)$. The set $Z=\{0\}$ is convex.
By using the multiplier variable $\lambda$,
we get a single level optimization problem:
	\begin{equation*} \left\{
		\begin{aligned}
			r^* :=	\min\limits_{x\in \RR,y\in \RR,\lambda\in \RR}&\ (x-1)^2+y^2\\
			\text{s.t.}&\ x^2+2\lambda y =0, \, \lambda \geq 0,\, y^2\leq 0,\, \lambda y^2=0.\\
		\end{aligned}
		\right.
	\end{equation*}
The feasible points of this problem are $(0,0,\lambda)$
with $\lambda\geq 0$. We have $r^* = 1>F^*$.
The KKT reformulation approach fails in this example,
since $y^*\in S(x^*)$ is not a KKT point.
We solve the SBPP problem \reff{exm:comparsion:1}
by Algorithm~\ref{alg:exchange:method}. The Jacobian equation is
$\psi(x,y) = x^2y^2=0$, and we reformulate the problem as:
	\begin{equation*} \left\{
		\begin{aligned}
			s^* :=	\min\limits_{x\in \RR,y\in \RR}&\ (x-1)^2+y^2\\
			\text{s.t.}&\ x^2(z-y)\geq 0,\quad \forall z\in Z,\\
			&\ \psi(x,y) = x^2y^2=0.
		\end{aligned}
		\right.
	\end{equation*}
This problem is not an SIPP actually, since the set $Z$
only has one feasible point. At the initial step,
we find its optimal solution $(x^*,y^*) = (1,0)$,
and it is easy to check that $\min\limits_{z\in Z} H(x^*,y^*,z) = 0$,
which certifies that it is the global minimizer of
the SBPP problem \reff{exm:comparsion:1}.
\end{exm}

\begin{exm}   \label{exm:degenerate}
Consider the SBPP:
\be  \label{exm:degenerate:P}
\left\{
\baray{rl}
\min\limits_{x\in \re, \, y \in \re^2}  &~~ F(x,y): = x+y_1+y_2\\
\text{s.t.} & ~~ x-2 \geq 0, \quad 3-x \geq 0, \\
& ~~ y\in S(x) :=  \underset{z\in Z}{\tt argmin} \quad
f(x,z): = x(z_1+z_2), \\
\earay
\right.
\ee
where set $Z$ is defined by the inequalities:
\[
g_1(z):=z^2_1-z_2^2 - (z^2_1+z^2_2)^2 \geq 0,
\quad g_2(z) := z_1\geq 0.
\]
For all $x\in [2,3]$, one can check that $S(x) = \{(0,0)\}$.
Clearly, the global minimizer of \reff{exm:degenerate:P} is
$(x^*,y^*) = (2,0,0)$, and the optimal value $F^*= 2$. At $z^*= (0,0) $,
\[
\nabla_{z} f(x,z^*) = \bbm x \\ x\ebm,
\nabla_z g_1(z^*) = \bbm 0 \\ 0\ebm,
\nabla_z g_2(z^*) = \bbm 1 \\ 0\ebm.
\]
The KKT condition does not hold for the {lower level  program},
since $\nabla_{z} f(x,z^*)$ is not a linear combination
of $\nabla_z g_1(z^*)$ and $\nabla_z g_2(z^*)$.
%
%
By \cite[Proposition 3.4]{NiefiniteLas}, Lasserre relaxations in \reff{k:SOS:las}
do not have finite convergence for solving the {lower level  program}.
%
%
One {can check} that
\[
K_{FJ}(x) \, = \, \{(0,0), (0.8990,0.2409)\}
\footnote{They are the solutions of the equations
$g_1(z)=0, \, z_1+z_2+2(z_2-z_1)(z_1^2+z_2^2)=0.$}
,
\]
for all feasible $x$.
By Jacobian representation of $K_{FJ}(x)$, we get
\[
\psi(x,z) = \Big(
xg_1(z)g_2(z), \quad -xz_1( z_1+z_2+2(z_2-z_1)(z_1^2+z_2^2)), \quad
-xg_1(z)
\Big).
\]
Next, we apply Algorithm \ref{alg:exchange:method}
to solve \reff{exm:degenerate:P}.
Indeed, for $k=0$, $Z_0 = \emptyset$, we get
\[ (x_1^0,y_1^0) \approx ( 2.0000,0.0000,0.0000), \]
which is the true global minimizer.
We also get
\[
z_1^0 \approx (4.6320, -4.6330)\times 10^{-5}, \quad
v_1^0 \approx -5.2510\times 10^{-8}.
\]
For a small value of $\eps$ (e.g., $10^{-6}$),
Algorithm \ref{alg:exchange:method}
terminates successfully with the global minimizer of
\reff{exm:degenerate:P}.
\end{exm}

\subsection{Convergence analysis}

We study the convergence properties of Algorithm \ref{alg:exchange:method}.
For theoretical analysis,
one is mostly interested in its performance when
the tolerance parameter $\eps =0$ or
the maximum iteration number $k_{\max}=\infty$.

\begin{theorem} \label{theorem:exchange:simple}
For the simple bilevel polynomial program as in \reff{bilevel:pp},
assume the lower level  program is as in \reff{inopt:minfg}.
Suppose the subproblems $(P_k)$ and each $(Q_i^k)$
are solved globally by Lasserre relaxations.

\bit
\item [(i)]
Assume $\eps =0$.
If Algorithm \ref{alg:exchange:method} stops for some
$ k < k_{\max}$,
then each $(x^*,y^*) \in \mc{X}^*$
is a global minimizer of \reff{bilevel:pp}.

\item [(ii)] Assume $\eps =0$, $k_{\max}=\infty$, and the union
$\cup_{k\geq 0} Z_k$ is bounded.
Suppose Algorithm \ref{alg:exchange:method}
does not stop and each $S_k \ne \emptyset$ is finite.
Let $(x^*,y^*)$ be an arbitrary accumulation point of
the set $\cup_{k\geq 0} S_k$. If the value function $v(x)$,
as in \reff{def:v(x)}, is continuous at $x^*$,
then $(x^*, y^*)$  is a global minimizer of
the SBPP problem \reff{bilevel:pp}.

\item [(iii)]
Assume $k_{\max}=\infty$, the union $\cup_{k\geq 0} Z_k$ is bounded,
the set $\Xi = \{(x,y): \, \psi(x,y) =0, \xi(x,y) \geq 0\}$ is compact.
{Let $\Xi_1 =\{x : \,  \exists y, (x,y)\in \Xi\}$,
which is the projection of $\Xi$ onto the $x$-space.
Suppose $v(x)$ is continuous on $\Xi_1$.}
Then, for all $\eps>0$, Algorithm \ref{alg:exchange:method}
must terminate within finitely many steps,
and each {$(\bar{x}, \bar{y})\in \mc{X}^*$}
is a global minimizer of the approximate SIPP (\ref{pop:tilde(Pepsilon)}).

\eit
\end{theorem}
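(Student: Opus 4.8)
The plan is to prove the three parts in order, since each builds on the previous one, and the key technical engine throughout is the equivalence \reff{df:H(xyz)} together with the Jacobian characterization of Fritz John points (Lemma~\ref{lemma:gcp}), which guarantees that the extra constraints $\psi(x,y)=0$ are redundant and never remove a genuine feasible point of \reff{bilevel:pp}.

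\smallskip

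\textbf{Part (i).} Suppose the algorithm stops at some $k<k_{\max}$ with $\mc{X}^*\neq\emptyset$, and let $(x^*,y^*)=(x^k_i,y^k_i)\in\mc{X}^*$. First I would observe that stopping means $v^k_i\geq 0$ (with $\eps=0$), i.e.\ $\min_{z\in Z}H(x^*,y^*,z)\geq 0$ where the minimum is taken over the Jacobian-restricted set $\{z:\psi(x^*,z)=0,\ g_j(z)\geq 0\}$. The point is that this restricted minimum equals the true minimum over all of $Z$: any global minimizer of $H(x^*,\cdot)$ over $Z$ is a Fritz John point of \reff{inopt:minfg}, hence lies in $K_{FJ}(x^*)=W(x^*)$ by Lemma~\ref{lemma:gcp}, hence satisfies $\psi(x^*,z)=0$; so restricting to the Jacobian set does not change the optimal value. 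Therefore $H(x^*,y^*,z)\geq 0$ for all $z\in Z$, and since $(x^*,y^*)$ also satisfies $\xi(x^*,y^*)\geq 0$ (in particular $y^*\in Z$), the equivalence \reff{df:H(xyz)} gives $y^*\in S(x^*)$, so $(x^*,y^*)$ is feasible for \reff{bilevel:pp}. Finally, since $(P_k)$ is a relaxation of \reff{sipp:FsGH} (its feasible set contains that of \reff{sipp:FsGH}, which equals the feasible set of \reff{bilevel:pp}), we have $F_k^*\leq F^*$; combined with feasibility $F(x^*,y^*)\geq F^*$ and optimality $F(x^*,y^*)=F_k^*$, we get $F(x^*,y^*)=F^*$.

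\smallskip

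\textbf{Part (ii).} Here I would use a standard semi-infinite exchange argument. Let $(x^*,y^*)$ be an accumulation point of $\cup_k S_k$, say $(x^{k_\ell}_{i_\ell},y^{k_\ell}_{i_\ell})\to(x^*,y^*)$ along a subsequence. Since each $(x^{k}_{i},y^{k}_{i})$ satisfies $\psi=0$, $\xi\geq 0$ (closed conditions), the limit does too; in particular $y^*\in Z$. The crux is to show $H(x^*,y^*,z)\geq 0$ for all $z\in Z$. Suppose not: $H(x^*,y^*,\bar z)=-\delta<0$ for some $\bar z\in Z$. By continuity of $v$ at $x^*$ and of $f$, and using that the generated cut points $z^k_{i,j}$ are global minimizers of $H(x^k_i,\cdot)$ over the Jacobian set (which, as in Part (i), attains the true value function $v(x^k_i)$), one gets for $\ell$ large that $v^{k_\ell}_{i_\ell}=v(x^{k_\ell}_{i_\ell})-f(x^{k_\ell}_{i_\ell},y^{k_\ell}_{i_\ell})\leq -\delta/2<0$, so the algorithm does not stop and adds a cut point $z^{k_\ell}_{i_\ell,j}$ with $H(x^{k_\ell}_{i_\ell},y^{k_\ell}_{i_\ell},z^{k_\ell}_{i_\ell,j})\leq-\delta/2$; this cut point lies in $Z_{k'}$ for all $k'>k_\ell$. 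Passing to a further subsequence so that these cut points converge (using boundedness of $\cup_k Z_k$) to some $z^\infty\in Z$, and using that at a later index $k_{\ell'}>k_\ell$ the constraint $H(x^{k_{\ell'}}_{i_{\ell'}},y^{k_{\ell'}}_{i_{\ell'}},z^{k_\ell}_{i_\ell,j})\geq 0$ is enforced, a continuity/limit argument forces $H(x^*,y^*,z^\infty)\geq 0$ while simultaneously $H(x^*,y^*,z^\infty)\leq -\delta/2$, a contradiction. Hence $H(x^*,y^*,z)\geq 0$ for all $z\in Z$, so by \reff{df:H(xyz)} $y^*\in S(x^*)$ and $(x^*,y^*)$ is feasible for \reff{bilevel:pp}. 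For optimality, $F(x^*,y^*)=\lim F^{k_\ell}_{k_\ell}$ along the subsequence; since $\{F_k^*\}$ is nondecreasing and bounded above by $F^*$, and $F(x^*,y^*)\geq F^*$ by feasibility, we conclude $F(x^*,y^*)=F^*$.

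\smallskip

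\textbf{Part (iii).} For finite termination with $\eps>0$, I would argue by contradiction: if the algorithm never stops, it generates an infinite sequence of iterates in the compact set $\Xi$ and cut points in the bounded set $\cup_k Z_k$. Pick convergent subsequences $(x^{k_\ell}_{i_\ell},y^{k_\ell}_{i_\ell})\to(x^*,y^*)\in\Xi$ and $z^{k_\ell}_{i_\ell,j_\ell}\to z^*$, where each cut point satisfies $v^{k_\ell}_{i_\ell}=H(x^{k_\ell}_{i_\ell},y^{k_\ell}_{i_\ell},z^{k_\ell}_{i_\ell,j_\ell})<-\eps$ (non-termination) and, as noted, $v^{k_\ell}_{i_\ell}=v(x^{k_\ell}_{i_\ell})-f(x^{k_\ell}_{i_\ell},y^{k_\ell}_{i_\ell})$. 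But for $\ell'>\ell$, the point $z^{k_\ell}_{i_\ell,j_\ell}\in Z_{k_{\ell'}}$, so the constraint $H(x^{k_{\ell'}}_{i_{\ell'}},y^{k_{\ell'}}_{i_{\ell'}},z^{k_\ell}_{i_\ell,j_\ell})\geq 0$ is active in $(P_{k_{\ell'}})$; taking $\ell,\ell'\to\infty$ and using continuity of $H$ gives $H(x^*,y^*,z^*)\geq 0$, whereas taking the limit of $H(x^{k_\ell}_{i_\ell},y^{k_\ell}_{i_\ell},z^{k_\ell}_{i_\ell,j_\ell})<-\eps$ gives $H(x^*,y^*,z^*)\leq-\eps<0$, contradiction. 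Hence the algorithm terminates in finitely many steps. When it terminates, each $(\bar x,\bar y)\in\mc{X}^*$ satisfies $v^{\bar k}_i\geq-\eps$, i.e.\ $v(\bar x)-f(\bar x,\bar y)\geq-\eps$ (again using that the Jacobian-restricted minimum of $H(\bar x,\cdot)$ equals $v(\bar x)-f(\bar x,\bar y)$ via Lemma~\ref{lemma:gcp}), hence $H(\bar x,\bar y,z)\geq-\eps$ for all $z\in Z$; together with $\psi(\bar x,\bar y)=0$, $\xi(\bar x,\bar y)\geq 0$ this means $(\bar x,\bar y)$ is feasible for the approximate SIPP \reff{pop:tilde(Pepsilon)}. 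Since $(P_{\bar k})$ is a relaxation of \reff{pop:tilde(Pepsilon)} (every $Z_{\bar k}\subseteq Z$), $F_{\bar k}^*\leq F_\eps^*$, while feasibility gives $F(\bar x,\bar y)\geq F_\eps^*$ and optimality gives $F(\bar x,\bar y)=F_{\bar k}^*$, so $F(\bar x,\bar y)=F_\eps^*$.

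\smallskip

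\textbf{Main obstacle.} The delicate point in all three parts is the interplay between the Jacobian equation $\psi(x,\cdot)=0$ in subproblem $(Q^k_i)$ and the \emph{unrestricted} value function $v(x)=\inf_{z\in Z}f(x,z)$: one must check that the minimum of $H(x^k_i,\cdot)$ over the Jacobian-restricted set actually equals $v(x^k_i)-f(x^k_i,y^k_i)$, which relies on every global minimizer of the inner problem being a Fritz John point and hence captured by Lemma~\ref{lemma:gcp}. The second genuinely technical point, concentrated in (ii) and (iii), is the exchange-method limiting argument: carefully choosing nested subsequences so that a previously added cut point, which is feasible (value $\geq 0$) in all later subproblems but has value $<-\eps$ (or $<0$) at the limit iterate, yields the contradiction — this requires keeping track of two indices ($\ell$ for the iterate, $\ell'>\ell$ for when its cut is enforced) and invoking continuity of $H$ and of $v$ on the compact sets.
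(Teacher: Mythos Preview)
Your proposal is correct and follows essentially the same approach as the paper: in each part you use that $(P_k)$ relaxes \reff{sipp:FsGH}, that earlier cut points become constraints in later subproblems, and pass to limits via continuity of $H$ and $v$. The only stylistic differences are that you argue part~(ii) by contradiction while the paper gives a direct proof that $\phi(x^*,y^*):=v(x^*)-f(x^*,y^*)\geq 0$, and that you make explicit (via Lemma~\ref{lemma:gcp}) why the Jacobian equation in $(Q_i^k)$ does not change the optimal value---a point the paper leaves implicit.
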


\begin{proof}	
(i) The SBPP \reff{bilevel:pp} is equivalent to \reff{sipp:FsGH}.
Note that each optimal value $F_k^* \leq F^*$ and the sequence
$\{ F_k^* \}$ is monotonically increasing.
If Algorithm \ref{alg:exchange:method} stops at the $k$-th iteration,
then each $(x^*,y^*) \in {\mathcal{X}^*}$ is feasible for \reff{sipp:FsGH},
and also feasible for \reff{bilevel:pp}, so it holds that
\[
F^* \geq F_{k}^* = F(x^*, y^*) \geq F^*.
\]
This implies that $(x^*,y^*)$ is a global optimizer of problem \reff{bilevel:pp}.

(ii) Suppose Algorithm \ref{alg:exchange:method} does not stop
and each $S_k \ne \emptyset$ is finite.
For each accumulation point $(x^*, y^*)$ of the union $\cup_{k\geq 0} S_k$,
there exists a sequence $\{ k_\ell \}$ of integers such that
$k_\ell \to \infty$ as $\ell \to \infty$ and {
\[
(x^{k_\ell}, y^{k_\ell})  \to  (x^*, y^*), \quad \mbox{where  each } (x^{k_\ell}, y^{k_\ell}) \in S_{k_\ell}.
\]
Since the feasible set of problem $(P_{k_{\ell}})$ contains  the one for problem \reff{bilevel:pp}, we have $ F^*_{k_\ell} = F(x^{k_\ell}, y^{k_\ell})\leq F^* \text{ and } $  hence $F(x^*,y^*)\leq F^*$ by the continuity of $F$.  To show the opposite inequality it suffices to show that $(x^*,y^*)$ is feasible for problem \reff{bilevel:pp}. {Recall that 
the function $\xi$ is defined as in \reff{sc3:xi:psi}.} Since $\xi(x^{k_\ell},y^{k_\ell}) \geq 0$  and $\psi(x^{k_\ell},y^{k_\ell}) =0$, by the continuity  of the mappings $\xi, \psi$, we have $\xi(x^*,y^*)
\geq 0$
 and $\psi(x^{*},y^{*}) =0$.}
Define the function
\begin{equation}\label{def:phi}
\phi(x,y):= {\inf\limits_{z\in Z}} H(x,y,z).
\end{equation}

%
Clearly, $\phi(x,y)  = v(x) - f(x,y)$,
and $\phi(x^*,y^*) =0$ if and only if $(x^*,y^*)$
is a feasible point for \reff{bilevel:pp}.
By the definition of $v(x)$ as in \reff{def:v(x)}
and that $v(x)$ is continuous at $x^*$, we always have
$
\phi(x^*, y^*)\leq 0.
$
To prove $\phi(x^*,y^*) =0$, it remains to show $\phi(x^*, y^*) \geq 0$.
For all $k^{\prm}$ and for all $k_{\ell} \geq k^{\prm}$,
the point $(x^{k_\ell},y^{k_\ell})$ is feasible for the subproblem $(P_{k^\prm})$, so
\[
H(x^{k_\ell},y^{k_\ell},z)\geq 0\quad \forall z\in Z_{k^\prm}.
\]
Letting $\ell \to \infty$, we then get
\be \label{H(x*y*z)>=0}
H(x^*,y^*,z)\geq 0\quad \forall z\in Z_{k^\prm}.
\ee
The above is true for all $k^\prm$. In Algorithm~\ref{alg:exchange:method},
for each $k_{\ell}$, there exists $z^{k_\ell} \in T_i^{k_\ell}$,
for some $i$, such that
\[
{\phi(x^{k_\ell},y^{k_\ell})} = H(x^{k_\ell},y^{k_\ell},z^{k_\ell}).
\]
Since $ z^{k_\ell} \in Z_{k_\ell+1}$, by \reff{H(x*y*z)>=0}, we know
\[
H(x^*,y^*,z^{k_\ell} )\geq 0.
\]
Therefore, it holds that
\begin{equation}
\baray{rcl}
\phi(x^*, y^*) & = & \phi(x^{k_\ell},y^{k_\ell})+\phi(x^*,y^*)
-\phi(x^{k_\ell},y^{k_\ell})\\
			& \geq & [H(x^{k_\ell},y^{k_\ell},z^{k_\ell})
 - H(x^*, y^*,z^{k_\ell})] + \\
 &  & [\phi(x^*, y^*)-\phi(x^{k_\ell},y^{k_\ell})].
\earay
\end{equation}
Since $z^{k_\ell}$ belongs to the bounded set $\cup_{k\geq 0} Z_k$,
there exists a subsequence $z^{k_{\ell,j}}$ such that $z^{k_{\ell,j}} \to z^* \in Z$.
The polynomial $H(x,y,z)$ is continuous at $(x^*, y^*, z^*)$. Since $v(x)$ is continuous at $x^*$,
$\phi(x,y) = v(x) - f(x,y)$ is also continuous at $(x^*, y^*)$.
Letting $\ell \to \infty$, we get $\phi(x^*, y^*)\geq 0$. Thus, $(x^*, y^*)$ is feasible for \reff{sipp:FsGH} and so $F(x^*,y^*) \geq F^*$.
In the earlier, we already proved $F(x^*,y^*) \leq F^*$,
so $(x^*, y^*)$ is a global optimizer of \reff{sipp:FsGH},
i.e., $(x^*, y^*)$  is a global minimizer of
the SBPP problem \reff{bilevel:pp}.

(iii)
Suppose otherwise the algorithm does not stop within finitely many steps.
Then there exist a sequence $\{(x^k, y^k, z^k)\}$ such that
$(x^k,y^k) \in S_k$, $z^k \in \cup_{i=1}^{r_k} T_i^k$,
\[
H(x^k, y^k, z^k) < -\eps
\]
for all $k$. Note that $(x^k,y^k) \in \Xi$ and $z^k \in Z_{k+1}$.
{By the assumption that $\Xi$ is compact and
$\cup_{k\geq 0} Z_k$ is bounded, the sequence $\{(x^k, y^k, z^k)\}$
has a convergent subsequence, say,
\[
(x^{k_\ell}, y^{k_\ell}, z^{k_\ell})  \, \to \, {(x^*,y^*,z^*)} \qquad
\mbox{ as } \quad \ell \to \infty.
\]
So, it holds that  ${(x^*,y^*)}\in \Xi,~z^*\in Z$ and
$H{(x^*,y^*,z^*)}  \leq -\eps$.
Since $\Xi$ is compact, the projection set $\Xi_1$ is also compact,
hence ${x^*} \in \Xi_1$. By the assumption, we know
$v(x)$ is continuous at ${x^*}$.
Similar to the proof in (ii), we have $\phi{(x^*,y^*)} =0$, then ${(x^*,y^*)}$ is a feasible point for \reff{bilevel:pp}, and we will get $$H{(x^*,y^*,z^*)} = f{(x^*,z^*)}-f{(x^*,y^*)}\geq 0.$$  However, this contradicts that
$H{(x^*,y^*,z^*)} \leq -\eps$.
Therefore, Algorithm \ref{alg:exchange:method} must terminate within finitely many steps.
}
%

{Now suppose Algorithm \ref{alg:exchange:method} terminates within finitely many steps at $(\bar{x},\bar{y})\in \mathcal{X}^*$ with $\eps>0$. Then $(\bar{x}, \bar{y})$ must be a feasible solution to the approximate {SIPP} (\ref{pop:tilde(Pepsilon)}). Hence}
it is obvious that $(\bar{x}, \bar{y})$ is a global minimizer of (\ref{pop:tilde(Pepsilon)}).

\end{proof}



In Theorem \ref{theorem:exchange:simple},
we assumed that the subproblems $(P_k)$ and $(Q_i^k)$
can be solved globally by Lasserre relaxations.
This is a reasonably well assumption.
Please see the remarks A)-D) after Algorithm~\ref{alg:exchange:method}.
In the items (ii)-(iii),
the value function $v(x)$ {is} assumed to be continuous {at certain points}.
This can be satisfied under some conditions.
The {\it restricted inf-compactness} (RIC) is {such a condition}.
The value function $v(x)$ is said to have RIC at $x^*$ if $v(x^*)$ is finite and
there exist a compact set $\Omega$ and a positive number $\epsilon_0$,
such that for all $\| x - x^* \| < \eps_0 $ with $v(x) <v(x^*)+\epsilon_0$,
there exists $z\in S(x) \cap \Omega$.
For instance, if the set $Z$ is compact, or the lower level objective
$f(x^*,z)$ is weakly coercive in $z$  with respect to set $Z$, i.e.,
\[ \lim_{z\in Z, \|z\|\rightarrow \infty} f(x^*,z)=\infty, \]
then $v(x)$ has restricted inf-compactness at $x^*$; see, e.g.,
\cite[\S6.5.1]{Clarke}.
Note that the union $\cup_{k\geq 0} Z_k$ is contained in $Z$.
So, if  $Z$ is compact
then $\cup_{k\geq 0} Z_k$ is bounded.

\begin{prop}\label{valuefunction}
For the SBPP problem \reff{bilevel:pp},
assume the lower level  program is as in \reff{inopt:minfg}.
If the value function $v(x)$ has restricted inf-compactness at $x^*$,
then $v(x)$ is continuous at $x^*$.
\end{prop}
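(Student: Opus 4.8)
The plan is to establish continuity of $v$ at $x^*$ by proving separately that $v$ is upper semicontinuous and lower semicontinuous at $x^*$; only the second of these will use the restricted inf-compactness hypothesis.

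\emph{Upper semicontinuity.} Here the key observation is that the feasible set $Z$ of the lower level program \reff{inopt:minfg} does not depend on $x$, so $v$ is a pointwise infimum of the continuous (indeed polynomial) functions $x\mapsto f(x,z)$, $z\in Z$. Concretely, given $\delta>0$, since $v(x^*)$ is finite I would pick $z_0\in Z$ with $f(x^*,z_0)<v(x^*)+\delta$; by continuity of $f(\cdot,z_0)$ there is a neighborhood of $x^*$ on which $f(x,z_0)<v(x^*)+\delta$, and since $z_0\in Z$ this forces $v(x)\le f(x,z_0)<v(x^*)+\delta$ there. Letting $\delta\downarrow 0$ gives $\limsup_{x\to x^*}v(x)\le v(x^*)$, with no appeal to RIC.

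\emph{Lower semicontinuity.} I would argue by contradiction. Suppose $\liminf_{x\to x^*}v(x)<v(x^*)$; then there is a sequence $x_k\to x^*$ along which $v(x_k)$ converges to some $L<v(x^*)$ (a priori $L$ could be $-\infty$). For all large $k$ we have $\|x_k-x^*\|<\epsilon_0$ and $v(x_k)<v(x^*)+\epsilon_0$, so RIC supplies $z_k\in S(x_k)\cap\Omega$; in particular $f(x_k,z_k)=v(x_k)$, which is therefore finite, and $z_k$ lies in the fixed compact set $\Omega$. Passing to a further subsequence, $z_k\to z^*\in\Omega$, and since $Z$ as in \reff{Z:sbpp} is closed (it is cut out by the inequalities $g_j\ge 0$ with $g_j$ continuous) and each $z_k\in S(x_k)\subseteq Z$, the limit satisfies $z^*\in Z$. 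Continuity of the polynomial $f$ gives $v(x_k)=f(x_k,z_k)\to f(x^*,z^*)$, which is finite and $\ge v(x^*)$ because $z^*\in Z$. This contradicts $v(x_k)\to L<v(x^*)$ (and incidentally rules out $L=-\infty$). Hence $\liminf_{x\to x^*}v(x)\ge v(x^*)$, and combining the two parts yields $\lim_{x\to x^*}v(x)=v(x^*)$.

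The only substantive step is the lower semicontinuity argument, and within it the one point that genuinely uses the hypothesis is the extraction of the near-optimal lower level solutions $z_k$ from a \emph{fixed} compact set $\Omega$: without such a bound the sequence $\{z_k\}$ could escape to infinity, and then neither the closedness of $Z$ nor the continuity of $f$ could be invoked on a limit point. Everything else is routine, and the whole argument is the polynomial-optimization specialization of the classical reasoning in \cite[\S6.5.1]{Clarke}.
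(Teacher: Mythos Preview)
Your proof is correct and follows essentially the same two-step decomposition as the paper: upper semicontinuity from the fact that the lower level feasible set $Z$ is independent of $x$ (so $v$ is an infimum of continuous functions), and lower semicontinuity from the restricted inf-compactness hypothesis. The only difference is presentational: the paper disposes of each half in one line by citing \cite[Theorem 4.22(1)]{Bank} for the upper semicontinuity and \cite[p.~246]{Clarke} (alternatively \cite[Theorem 3.9]{Guo-L-Y-Z}) for the lower semicontinuity, whereas you have written out the standard arguments behind those citations explicitly.
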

\begin{proof}
On one hand, since the lower level constraint is independent of $x$,
the value function $v(x)$ is always upper semicontinuous
\cite[Theorem 4.22 (1)]{Bank}. On the other hand,
since the restricted inf-compactness holds it follows from \cite[page 246]{Clarke}
(or see the proof of \cite[Theorem 3.9]{Guo-L-Y-Z})
that $v(x)$ is lower semicontinuous.
Therefore $v(x)$ is continuous at $x^*$.
\end{proof}


\section{General Bilevel Polynomial Programs}
\label{sec:GBPP}

In this section, we study general bilevel polynomial programs as in \reff{bilevel:pp}.
For GBPPs, the feasible set $Z(x)$ of the {lower level program} \reff{df:P(x)}
varies as $x$ changes, i.e., the constraining polynomials $g_j(x,z)$
depends on $x$.

For each pair $(x,y)$ that is feasible for \reff{bilevel:pp},
$y$ is an optimizer for the {lower level  program \reff{df:P(x)} parameterized by $x$},
so $y$ must be a Fritz John point of \reff{df:P(x)}, i.e.,
there exists {$ (\mu_0,\mu_1, \ldots, \mu_{m_2}) \ne 0 $} satisfying
\[
\mu_0 \nabla_z f(x,y) - \sum_{j \in [m_2] } \mu_j \nabla_z g_j(x,y) = 0,
\quad \mu_j g_j(x,y) = 0 \, (j \in [m_2]).
\]
For convenience, we still use $K_{FJ}(x)$ to denote the set
of Fritz John points of \reff{df:P(x)} at $x$.
The set $K_{FJ}(x)$ consists of common zeros of some polynomials.
As in \reff{pro::opti:2}, choose the polynomials
$(f(z), g_1(z), \ldots, g_m(z))$ to be
$(f(x,z), g_1(x,z), \ldots, g_{m_2}(x,z))$,
whose coefficients depend on $x$.
Then, construct $\psi_1, \ldots, \psi_L$ in the same way as in
\reff{jac:poly:psi}. Each $\psi_j$ is also a polynomial in $(x,z)$.
Thus, every $(x,y)$ feasible in \reff{bilevel:pp} satisfies
$\psi_j(x,y)=0$, for all $j$. For convenience of notation, we still denote
the polynomial tuples $\xi, \psi$ as in \reff{sc3:xi:psi}.

We have seen that \reff{bilevel:pp} is equivalent to
the  {generalized} semi-infinite polynomial program
($H(x,y,z)$ is as in \reff{df:H(xyz)}):
\be \label{sip::GBPP}
\left\{
\baray{rl} F^* := \min\limits_{x\in \re^n, \, y\in \re^p}  & \, F(x,y) \\
\text{s.t.} & \, \psi(x,y) = 0, \, \xi(x,y) \geq 0, \\
& \, H(x,y,z) \geq 0, \, \forall~z\in Z(x).
\earay \right.
\ee
Note that the constraint $H(x,y,z) \geq 0$ in \reff{sip::GBPP}
is required for $z\in Z(x)$, which depends on $x$.
Algorithm \ref{alg:exchange:method} can also be applied to solve \reff{sip::GBPP}.
We first give an example for showing how it works.

\begin{exm}\label{exm:A7}
(\cite[Example 3.23]{BIEXM})
Consider the GBPP:
	\begin{equation}\label{exm:A7:p}
	\left\{
	\begin{aligned}
	\min\limits_{x,y\in [-1,1]}& \quad x^2\\
	\text{s.t.} & \quad 1+x-9x^2-y\leq 0, \\
	& \quad y\in \underset{z\in[-1,1]}{\tt argmin}	\{z \ \ \text{s.t.}~z^2(x-0.5)\leq 0\}.
	\end{aligned}
	\right.
	\end{equation}
By simple calculations, one can show that
	\[
	Z({x}) = \left\{
	\begin{aligned}
	& \{ 0 \},\quad \quad \quad \quad x\in (0.5,1],\\
	& [-1,1], \quad x\in [-1,0.5],
	\end{aligned}
	\right.
	\quad
	S(x) = \left\{
	\begin{aligned}
	& \{ 0 \},\quad \quad\quad x\in (0.5,1],\\
	& \{ -1 \}, \quad x\in [-1,0.5].
	\end{aligned}
	\right.
	\]
The set $\mathcal{U} = \{(x,y)\in [-1,1]^2: \, 1+x-9x^2-y\leq 0, y^2(x-0.5)\leq 0\}$.
The feasible set of \reff{exm:A7:p} is:
\[
\mathcal{F}:= \Big(
\left\{(x,0): x\in (0.5,1]\right\}\cup\left\{(x,-1): x\in [-1,0.5]\right\}
\Big)\cap \mathcal{U}.
\]
%
%
%
One can show that the global minimizer and the optimal values are
\[
(x^*,y^*) = \left(\frac{1-\sqrt{73}}{18},-1\right)
\approx (-0.4191, -1), \quad
F^* = \left(\frac{1-\sqrt{73}}{18}\right)^2 \approx 0.1757.
\]
By the Jacobian representation of Fritz John points, we get the polynomial
\[
\psi(x,y) = (x-0.5)y^2(y^2-1).
\]
We apply Algorithm \ref{alg:exchange:method} to solve \reff{exm:A7:p}.
The computational results are reported in Table~\ref{table:exmA7}.
\begin{table}[htb]
\caption{Results of Algorithm \ref{alg:exchange:method} for solving \reff{exm:A7:p}.}
\label{table:exmA7}
\centering
\begin{scriptsize}
\begin{tabular}{|c|c|c|c|c|c|c|c|} \hline
{\tt Iter} $k$   & $(x_i^k,y_i^k)$  & $z_{i,j}^k$ & $F_k^*$ &  $v_i^k$   \\ 	\hline
 0  & (0.0000, 1.0000)  & -1.0000 & 0.0000  & -2.0000  \\		\hline
 1  & (-0.4191, -1.0000) & -1.0000 & 0.1757 & -2.4e-11 \\ \hline
\end{tabular}
\end{scriptsize}
\end{table}
As one can see, Algorithm~\ref{alg:exchange:method} takes two iterations
to solve \reff{exm:A7:p} successfully.
\qed
\end{exm}

However, we would like to point out that
Algorithm~\ref{alg:exchange:method} might not solve GBPPs globally.
The following is such an example.

\begin{exm}\label{exm:A12}
	(\cite[Example 5.2]{BIEXM})
	Consider the GBPP:
	\be \label{exm:A12:p}
	\left\{ \baray{rl}
	\min\limits_{x \in \re, y\in \re} & \quad (x-3)^2+(y-2)^2\\
	\text{s.t.} & \quad 0 \leq x \leq 8, \quad y \in S(x),
	\earay	\right.
	\ee
	where $S(x)$ is the set of minimizers of the optimization problem
	\[
	\left\{ \baray{rl}
	\min\limits_{z \in \re} & (z-5)^2\\
	\text{s.t.}~& 0\leq z \leq 6, \quad -2x+z-1\leq 0, \\
	& x-2z+2\leq 0, \quad   x+2z-14\leq 0.
	\earay \right.
	\]
	It can be shown that
	\[
	S(x) = \left\{
	\begin{aligned}
	& \{ 1+2x \},& x\in [0,2],\\
	& \{ 5 \}, & x\in (2,4], \\
	& \{ 7-\frac{x}{2} \},& x\in (4,6],\\
	& {\emptyset},&  x\in (6,8].
	\end{aligned}
	\right.
	\]	
	The feasible set of \reff{exm:A12:p} is thus the set
	\[
	\mathcal{F}: =\{(x,y) \mid x\in [0,6], \,\, y\in S(x)\}.
	\]
	It consists of three connected line segments.
	%
	%
	One can easily check that the global optimizer and the optimal values are
	\[
	(x^*,y^*) = (1,3), \quad \quad F^* = 5.
	\]
	The polynomial $\psi$ in the Jacobian representation is
	\[
	\psi(x,y) \, = \, (-2x+y-1)(x-2y+2)(x+2y-14)y(y-6)(y-5).
	\]
	We apply Algorithm \ref{alg:exchange:method} to solve \reff{exm:A12:p}.
	The computational results are reported in Table~\ref{table:exmA12p}.
	\begin{table}[htb]
		\caption{Results of Algorithm \ref{alg:exchange:method} for solving \reff{exm:A12:p}.}
		\label{table:exmA12p}
		\centering
		\begin{scriptsize}
			\begin{tabular}{|c|c|c|c|c|c|c|c|} \hline
{\tt Iter} $k$   & $(x_i^k,y_i^k)$  & $z_{i,j}^k$ & $F_k^*$ &  $v_i^k$   \\ 	\hline
				0  & (2.7996, 2.3998)  & 5.0021 & 0.2000  & -6.7611 \\	\hline
			1  & (2.9972, 5.0000)  &  5.0021 &  9.0001 &   4.41e-6 \\ \hline
			\end{tabular}
		\end{scriptsize}
	\end{table}
For $\eps =10^{-6}$, Algorithm \ref{alg:exchange:method} stops at $k=1$,
	and {returns} the point $(2.9972, 5.0000)$,
	which is not a global minimizer.
	However, it is interesting to note that
	the computed solution $(2.9972, 5.0000) \approx (3,5)$,
	 a  local optimizer of problem \reff{exm:A12:p}.
\qed

\end{exm}

Why does Algorithm \ref{alg:exchange:method} fail to
find a global minimizer in Example~\ref{exm:A12}?
By adding $z^0$ to the discrete subset $Z_1$,
the feasible set of $(P_1)$ becomes
\[
\{ x\in X,y\in Z(x)\} \cap \{  \psi(x,y) = 0 \} \cap \{  |y -5| \leq 0.0021\}.
\]
It does not include the unique global optimizer $(x^*,y^*)=(1,3)$.
In other words, the reason is that
$H(x^*,y^*,z^0) \geq 0$ fails to hold
and hence by adding $z^0$, the true optimal solution
$(x^*,y^*)$ is not in the feasible region of problem $(P_1)$.

From the above example, we observe that the difficulty
for solving GBPPs globally comes from
the dependence of the lower level feasible set on $x$.
For a global optimizer $(x^*,y^*)$,
it is possible that $H(x^*,y^*,z_{i,j}^k) \not\geq 0$
for some $z_{i,j}^k$ at some step,
i.e., $(x^*,y^*)$ may fail to satisfy the newly added constraint in $(P_{k+1})$:
$H(x,y,z_{i,j}^k)\geq 0$. In other words,
$(x^*,y^*)$ may not be feasible for the subproblem $(P_{k+1})$.
Let $\mathcal{X}_k$ be the feasible set of problem $(P_{k})$.
Since $Z_k\subseteq Z_{k+1}$, we have $\mathcal{X}_{k+1}\subseteq \mathcal{X}_k$
and $(x^*,y^*)$ is not feasible for $(P_{\ell})$, for all $\ell \geq k+1$.
In such case, Algorithm \ref{alg:exchange:method}
will fail to find a global optimizer.
However, this will not happen for SBPPs, since {$Z(x)\equiv Z$} for all $x$.
For all $z\in Z$, we have $H(x^*,y^*,z)\geq 0$, i.e., $(x^*,y^*)$
is feasible for all subproblems $(P_k)$.
This is why Algorithm \ref{alg:exchange:method}
has convergence to global optimal solutions
for solving SBPPs. However, under some further conditions,
Algorithm \ref{alg:exchange:method} can solve GBPPs globally.

\begin{theorem} \label{theorem:exchange:general}
For the general bilevel polynomial program as in \reff{bilevel:pp},
assume that the lower level program is as in \reff{df:P(x)}
and the minimum value $F^*$ is achievable at a point
$(\bar{x},\bar{y})$ such that
$H(\bar{x},\bar{y},z)\geq 0$ for all $z\in Z_k$ and for all $k$.
Suppose $(P_k)$ and $(Q_i^k)$
are solved globally by Lasserre relaxations.
%
%

\bit

\item [(i)] Assume $\eps=0$. If
 Algorithm \ref{alg:exchange:method} stops
for some $k < k_{\max}$, then each $(x^*,y^*) \in \mc{X}^*$
is a global minimizer of the GBPP problem \reff{bilevel:pp}.

\item [(ii)]
Assume $\eps=0$, $k_{\max}=\infty$, and the union
$\cup_{k\geq 0} Z_k$ is bounded.
Suppose Algorithm \ref{alg:exchange:method}
does not stop and each $S_k \ne \emptyset$ is finite.
Let $(x^*,y^*)$ be an arbitrary accumulation point of
the set $\cup_{k\geq 0} S_k$. If the value function $v(x)$,
defined as in \reff{def:v(x)}, is continuous at $x^*$,
then $(x^*, y^*)$ is a global minimizer of the GBPP problem \reff{bilevel:pp}.

\item [(iii)]
Assume $k_{\max}=\infty$,
the union $\cup_{k\geq 0} Z_k$ is bounded,
the set $\Xi = \{(x,y):\, \psi(x,y) =0, \xi(x,y) \geq 0\}$ is compact.
{Let $\Xi_1 = \{x :\, \exists y, (x,y)\in \Xi\}$,
the projection of $\Xi$ onto the $x$-space.
Suppose $v(x)$ is continuous on $\Xi_1$.}
Then, for all $\eps>0$, Algorithm \ref{alg:exchange:method}
must terminate within finitely many steps.

\eit	
\end{theorem}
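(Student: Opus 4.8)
The plan is to run the argument of Theorem~\ref{theorem:exchange:simple} essentially verbatim, since the only structural fact that the simple case used about $Z(x)\equiv Z$ --- namely that every cut $z$ added to the grid satisfies $H(x^*,y^*,z)\ge 0$ at a global minimizer $(x^*,y^*)$ --- is replaced here by the hypothesis that $F^*$ is attained at some $(\bar x,\bar y)$ with $H(\bar x,\bar y,z)\ge 0$ for all $z\in Z_k$ and all $k$. First I would record two preliminary facts. (a) Because $S(x)\ne\emptyset$ for every feasible $x$ and, by Lemma~\ref{lemma:gcp}, a lower level minimizer lies in $K_{FJ}(x)=\{z:\psi(x,z)=0\}\cap Z(x)$, the subproblem $(Q_i^k)$ has optimal value $v_i^k = v(x_i^k)-f(x_i^k,y_i^k)=\phi(x_i^k,y_i^k)$, where $\phi$ is as in \reff{def:phi} with $Z$ replaced by $Z(x)$; hence $v_i^k\ge 0$ exactly when $(x_i^k,y_i^k)$ is feasible for \reff{bilevel:pp}. (b) Since $(\bar x,\bar y)$ is feasible for \reff{bilevel:pp} (so $\psi(\bar x,\bar y)=0$, $\xi(\bar x,\bar y)\ge 0$) and $H(\bar x,\bar y,z)\ge 0$ on every $Z_k$, it is feasible for every $(P_k)$; with $Z_k\subseteq Z_{k+1}$ this gives $F_0^*\le F_1^*\le\cdots\le F_k^*\le F^*$. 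This monotone sandwich is precisely what can break down without the hypothesis, as Example~\ref{exm:A12} shows.

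Granting (a)--(b), part (i) is immediate: if the algorithm stops with $\eps=0$, each $(x^*,y^*)\in\mc{X}^*$ has $v_i^k\ge 0$, hence is feasible for \reff{bilevel:pp} and $F(x^*,y^*)\ge F^*$, while $F(x^*,y^*)=F_k^*\le F^*$; equality follows. For part (ii) I would copy the accumulation-point argument of Theorem~\ref{theorem:exchange:simple}(ii): along a subsequence $(x^{k_\ell},y^{k_\ell})\to(x^*,y^*)$ with $(x^{k_\ell},y^{k_\ell})\in S_{k_\ell}$, continuity of $\xi,\psi$ gives $(x^*,y^*)\in\Xi$, in particular $y^*\in Z(x^*)$, so $\phi(x^*,y^*)\le 0$; for each fixed $k'$, $H(x^{k_\ell},y^{k_\ell},z)\ge 0$ on $Z_{k'}$ passes to the limit to give $H(x^*,y^*,z)\ge 0$ on $Z_{k'}$; choosing $z^{k_\ell}\in T_i^{k_\ell}$ with $\phi(x^{k_\ell},y^{k_\ell})=H(x^{k_\ell},y^{k_\ell},z^{k_\ell})$, using $z^{k_\ell}\in Z_{k_\ell+1}$ to get $H(x^*,y^*,z^{k_\ell})\ge 0$, and extracting $z^{k_\ell}\to z^*$ in the bounded set $\cup_{k\ge 0} Z_k$, the continuity of $H$ and of $\phi$ (from continuity of $v$ at $x^*$) forces $\phi(x^*,y^*)\ge 0$ via the estimate
\[
\phi(x^*,y^*)\ \ge\ \big[H(x^{k_\ell},y^{k_\ell},z^{k_\ell})-H(x^*,y^*,z^{k_\ell})\big]+\big[\phi(x^*,y^*)-\phi(x^{k_\ell},y^{k_\ell})\big].
\]
Thus $\phi(x^*,y^*)=0$, $(x^*,y^*)$ is feasible for \reff{bilevel:pp}, $F(x^*,y^*)\ge F^*$, and the reverse inequality comes from $F(x^{k_\ell},y^{k_\ell})=F^*_{k_\ell}\le F^*$ and continuity of $F$; here (b) is the one place the new hypothesis is used. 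Part (iii) asserts only termination, and I would prove it by contradiction as in Theorem~\ref{theorem:exchange:simple}(iii): if the algorithm never stops, pick $(x^k,y^k)\in S_k$ and $z^k\in\cup_i T_i^k$ with $H(x^k,y^k,z^k)<-\eps$; compactness of $\Xi$ and boundedness of $\cup_{k\ge 0} Z_k$ yield a convergent subsequence to $(x^*,y^*,z^*)$ with $(x^*,y^*)\in\Xi$ (so $x^*\in\Xi_1$, $v$ continuous at $x^*$) and $H(x^*,y^*,z^*)\le-\eps$; re-running the $\phi$-estimate of part (ii) gives $\phi(x^*,y^*)=0$, hence $H(x^*,y^*,z^*)=f(x^*,z^*)-f(x^*,y^*)\ge 0$, a contradiction.

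The hard part is not any single calculation but seeing exactly \emph{where} the $x$-dependence of $Z(x)$ bites and confirming that the displayed hypothesis is the minimal patch: the simple-case proof silently used $H(x^*,y^*,z)\ge 0$ for every cut $z$, which holds automatically when $Z$ is fixed but can fail when $Z(x)$ moves, and assuming a global minimizer $(\bar x,\bar y)$ that survives every cut is exactly what keeps $F_k^*\le F^*$ --- and therefore the whole argument --- intact. Everything downstream (the accumulation-point analysis, the above estimate, the continuity bookkeeping) is unchanged; the one routine step worth writing out carefully is the identity $v_i^k=v(x_i^k)-f(x_i^k,y_i^k)$, which relies on $S(x)\ne\emptyset$ together with Lemma~\ref{lemma:gcp}.
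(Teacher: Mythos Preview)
Your proposal is correct and follows exactly the paper's approach: the paper's entire proof is two sentences observing that the hypothesis makes $(\bar x,\bar y)$ feasible for every $(P_k)$, hence $F_k^*\le F^*$, and then stating that the rest is identical to the proof of Theorem~\ref{theorem:exchange:simple}. You have simply unpacked those details (and correctly located where the hypothesis is needed), so there is nothing to add.
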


\begin{proof} By the assumption, the point $(\bar{x},\bar{y})$
is feasible for the subproblem $(P_k)$, for all $k$.
Hence, we have $F_k^*\leq F^*$.
The rest of the proof is the same as the proof of Theorem
\ref{theorem:exchange:simple}.
\end{proof}

In the above theorem, the existence of the point
$(\bar{x},\bar{y})$ satisfying the requirement may be hard to check.
If $v(x)$ has restricted inf-compactness at $x^*$
and the Mangasarian-Fromovitz constraint qualification (MFCQ)
holds at all solutions of the lower level problem (\ref{df:P(x)}),
then the value function $v(x)$ is Lipschitz continuous at $x^*$;
see \cite[Corollary 1]{Clarke}.
Recently, it was shown in \cite[Corollary 4.8]{Guo-L-Y-Z}
that the MFCQ can be replaced by a weaker condition called
quasinormality in the above result.

\section{Numerical experiments}
\label{sc:numexp}

In this section, we present numerical experiments for solving BPPs.
In Algorithm~\ref{alg:exchange:method},
the polynomial optimization subproblems are solved by
Lasserre semidefinite relaxations,
implemented in software {\tt Gloptipoly~3} \cite{Gloptipoly}
and the SDP solver {\tt SeDuMi} \cite{sedumi}.
The computation is implemented with Matlab R2012a on a
MacBook Pro 64-bit OS X (10.9.5) system with 16GB memory and 2.3 GHz Intel Core i7 CPU.
In the algorithms, we set the parameters
$k_{\max}=  20$ and $\epsilon = 10^{-5}$.
In reporting computational results,
we use $(x^*,y^*)$ to denote the computed global optimizers,
$F^*$ to denote the {value of the outer objective function $F$ at $(x^*,y^*)$},
$v^*$ to denote $\inf_{z\in Z }~H(x^*,y^*,z)$,
{\tt Iter} to denote the total of number of iterations for convergence,
and {\tt Time} to denote the CPU time taken to solve the problem
(in seconds unless stated otherwise).
%
%
When $v^* \geq -\epsilon$,
the computed point $(x^*,y^*)$
is considered as a global minimizer of $(P)$,
up to the tolerance $\epsilon$. {Mathematically,
to solve BPPs exactly, we need to set $\epsilon =0$. However,
in computational practice, the round-off errors always exist,
so we choose $\epsilon >0$ to be a small number.}

\subsection{Examples of SBPPs}

 \begin{exm} \label{exm:appendix:13}
(\cite[Example 3.26]{BIEXM})
Consider the SBPP:
 \begin{equation*}
 \left\{
 \baray{rl}
 \min\limits_{x\in \RR^2,y\in \RR^3}  &   x_1y_1+x_2y_2^2+x_1x_2y^3_3\\
 \text{s.t.} &   x\in [-1,1]^2, \,\,  0.1-x_1^2\leq 0, \\
 &   1.5-y_1^2-y_2^2-y_3^2\leq 0, \\
 &   -2.5+y_1^2+y_2^2+y_3^2\leq 0, \\
 &    y\in S(x), \\
%
%
 \earay
 \right.
 \end{equation*}
where $S(x)$ is the set of minimizers of
\[
\min\limits_{z\in [-1,1]^3} \quad
x_1z_1^2+x_2z_2^2+(x_1-x_2)z_3^2.
\]
It was shown in \cite[Example 3.26]{BIEXM} that the unique global optimal solution is
$$x^* = (-1,-1), \quad y^* = (1,\pm 1,-\sqrt{0.5}).$$
Algorithm~\ref{alg:exchange:method} terminates after one iteration.
It takes about 14.83 seconds. We get
\[
x^* \approx (-1,-1), \quad y^* \approx (1,\pm 1,-0.7071),
\]
\[
F^* \approx -2.3536, \quad v^* \approx -5.71 \times 10^{-9}.
\]
\end{exm}

 \begin{exm} Consider the SBPP:
 	\begin{equation}\label{SBPP:exm:5.2}
 		\left\{
 		\baray{rl}
 		\min\limits_{x\in \RR^2,y\in \RR^3}  &   x_1y_1+x_2y_2+x_1x_2y_1y_2y_3\\
 		\text{s.t.} &   x\in [-1,1]^2, \,\,  y_1y_2-x_1^2\leq 0, \\
 		&    y\in S(x), \\
 		\earay
 		\right.
 	\end{equation}
 where $S(x)$ is the set of minimizers of
\[
\left\{ \baray{rl}
\min\limits_{z\in \RR^3} & x_1z_1^2+x_2^2z_2z_3-z_1z_3^2\\
\text{s.t.}~& 1\leq z_1^2+z_2^2+z_3^2\leq 2.
\earay \right.
\]
Algorithm~\ref{alg:exchange:method} terminates after one iteration.
It takes about 13.45 seconds. We get
\[
x^* \approx (-1,-1), \,\quad y^* \approx (1.1097,0.3143,-0.8184),
\]
\[
F^* \approx -1.7095, \, \quad
v^* \approx -1.19 \times 10^{-9}.
\]
By Theorem \ref{theorem:exchange:simple},
we know $(x^*,y^*)$ is a global optimizer,
up to a tolerance around $10^{-9}$.
\end{exm}

\begin{exm}\label{example:mitsos}
We consider some test problems from \cite{BIEXM}.
For convenience of display, we choose the problems
that have common constraints $x \in [-1,1]$
for the outer level program and $z \in [-1,1]$
for the inner level program.
When Algorithm~\ref{alg:exchange:method} is applied,
all these SBPPs are solved successfully.
The outer objective $F(x,y)$, the inner objective $f(x,z)$,
the global optimizers $(x^*,y^*)$,
the number of consumed iterations {\tt Iter}, the CPU time taken to solve the problem,
the optimal value $F^*$, and the value $v^*$ are reported in Table~\ref{SBPP:small}.
In all problems, {except Ex. 3.18 and Ex. 3.19,}
the optimal solutions we obtained coincide with those given in \cite{BIEXM}.
For Ex. 3.18, the global optimal solution for minimizing the upper level objective $-x^2+y^2$ subject to constraints
$x,y\in [-1,1]$ is $x^*=1,y^*=0$. It is easy to check that $y^*=0$ is the optimal solution for the lower level problem parameterized by $x^*=1$ and hence $x^*=1,y^*=0$ is  also the unique global minimizer for the SBPP in Ex. 3.18.
{For Ex.~3.19, as shown in \cite{BIEXM},}
the optimal solution must have $x^*\in (0,1)$.
For such $x^*$, $S(x^*)=\{\pm \sqrt{x^*}\}$. Plugging $y=\pm \sqrt{x}$ into the upper level objective we have $F(x,y)= \pm x\sqrt{x}+\sqrt{x}+\frac{x}{2}$. It is obvious that the minimum over $0<x<1$ should occur when $y=\sqrt{x}$. So minimizing $F(x,y)=  x\sqrt{x}-\sqrt{x}+\frac{x}{2}$ over $0<x<1$ gives
$x^*=(\frac{\sqrt{13}-1}{6})^2{\approx 0.1886}$,
$y^*=\frac{\sqrt{13}-1}{6}{\approx 0.4343}.$
\begin{table}[htb]
\caption{Results for some SBPP problems in \cite{BIEXM}.
They have the common constraints $x \in [-1,1]$ and $z \in [-1,1]$.
 }
\label{SBPP:small}
\centering
\begin{scriptsize}
\begin{tabular}{|c|l|c|c|c|c|c|} \hline
Problem & \qquad \qquad SBPP  & $(x^*,y^*)$ & {\tt Iter}
& {\tt Time}  & $F^*$ &  $v^*$   \\ \hline
Ex. 3.14 & $\baray{l} F = (x-1/4)^2 + y^2  \\ f = z^3/3 - xz   \earay $
& (0.2500, 0.5000) &  2 & 0.49 & 0.2500 & -5.7e-10 \\ \hline
Ex. 3.15 & $\baray{l} F = x+y  \\ f = xz^2/2-z^3/3   \earay $
& (-1.0000, 1.0000) &   2 & 0.42 &  2.79e-8 & -4.22e-8 \\  \hline
Ex. 3.16 & $\baray{l} F = 2x+y  \\ f =  -xz^2/2- z^4/4   \earay $
&  (-0.5, -1), (-1, 0) &  2 & 0.47 & -2.0000 & -6.0e-10 \\  \hline
Ex. 3.17 & $\baray{l} F = (x+1/2)^2 + y^2/2   \\ f = xz^2/2+z^4/4   \earay $
& $(-0.2500, \pm 0.5000)$ &    4 & 1.12 & 0.1875 & -8.3e-11  \\ \hline
Ex. 3.18 & $\baray{l} F = -x^2 + y^2   \\ f =  xz^2- z^4/2  \earay $
&  (1.0000, 0.0000) &   2 & 0.44 & -1.0000  & -3.1e-13 \\  \hline
Ex. 3.19 & $\baray{l} F = xy - y + y^2/2   \\ f = -xz^2+z^4/2  \earay $
&  (0.1886, 0.4343)  &   2 &  0.41 & -0.2581 & -3.6e-12 \\  \hline
Ex. 3.20 & $\baray{l} F = (x-1/4)^2 + y^2    \\ f =  z^3/3-x^2z  \earay $
&  (0.5000, 0.5000) &  2  &  0.38 & 0.3125 &  -1.1e-10 \\  \hline
\end{tabular}
\end{scriptsize}
\end{table}
%
%
\end{exm}

 \begin{exm}Consider the SBPP:
 	\begin{equation}\label{exm:bigger:SBPP}
 		\left\{
 		\baray{rl}
 		\min\limits_{x\in \RR^4,y\in \RR^4}  &   x^2_1y_1+x_2y_2+x_3y_3^2+x_4y_4^2\\
 		\text{s.t.} &   \|x\|^2  \leq 1, \,\,  y_1y_2-x_1\leq 0,\\
 		&  y_3y_4-x_3^2\leq 0,\,\,  y\in S(x), \\
 		\earay
 		\right.
 	\end{equation}
 	where $S(x)$ is the set of minimizers of
 	\[
 	\left\{ \baray{rl}
 	\min\limits_{z\in \RR^4} & z_1^2-z_2(x_1+x_2)-(z_3+z_4)(x_3+x_4)\\
 	\text{s.t.}~& \|z\|^2  \leq 1,\,\,  z_2^2+z_3^2+z_4^2-z_1\leq 0.
 	\earay \right.
 	\]
We apply Algorithm \ref{alg:exchange:method} to solve \reff{exm:bigger:SBPP}.
The computational results are reported in Table~\ref{table:exm:bigger:SBPP}. As one can see, Algorithm \ref{alg:exchange:method} stops when $k=4$
and solves \reff{exm:bigger:SBPP} successfully.
It takes about 20 minutes to solve the problem.
%
%
By Theorem \ref{theorem:exchange:simple},
we know the point $(x_i^k,y_i^k)$ obtained at $k=4$
is a global optimizer for
\reff{exm:bigger:SBPP}, up to a tolerance around $10^{-8}$.

\begin{table}[htb]
	\caption{Results of Algorithm \ref{alg:exchange:method} for solving \reff{exm:bigger:SBPP}.}
	\label{table:exm:bigger:SBPP}
	\centering
	\begin{scriptsize}
		\begin{tabular}{|c|c|c|c|c|c|c|c|} \hline
		{\tt Iter} $k$   &   $(x_i^k,y_i^k)$  &   $F_k^*$ &  $v_i^k$   \\ 	\hline
			0  & (-0.0000,1.0000,-0.0000,0.0000,0.6180,-0.7862,~0.0000,~0.0000)  & -0.7862 & -1.6406 \\
			\hline
			1  &  (0.0000,-0.0000,0.0000,-1.0000,0.6180, -0.0000,0.0000,-0.7862) &     -0.6180  &  -0.3458 \\
			&  (0.0003,-0.0002,-0.9999,0.0000,0.6180,~0.0001,-0.7861,-0.0000) &   -0.6180  &  -0.3458 \\
			\hline
			2  & (0.0000,-0.0000,-0.8623,-0.5064,0.6180,-0.0000,-0.6403,-0.4561) &     -0.4589 &  -0.0211 \\
			\hline
			3  & (0.0000,-0.0000,-0.7098,-0.7042,0.6180,-0.0000,-0.5570,-0.5548) &     -0.4371 &  -6.37e-5\\ \hline
			4 & (0.0000,-0.0000,-0.7071,-0.7071,0.6180,0.0000,-0.5559,-0.5559)
	& -0.4370& -2.27e-8	\\	\hline
		\end{tabular}
	\end{scriptsize}
\end{table}
 \end{exm}

An interesting special case of SBPPs is that
the inner level  {program} has no constraints, i.e.,
$Z = \re^p$. In this case, the set $K_{FJ}(x)$ of Fritz John points
is just the set of critical points of the inner objective $f(x,z)$.
It is easy to see that the polynomial $\psi(x,y)$ is given as
\[
\psi(x,z)=\Big( \frac{\pt}{\pt z_1} f(x,z),
\ldots, \frac{\pt}{\pt z_p} f(x,z) \Big).
\]

\begin{exm}  \label{exm:sbpp:Z=R^p}
(SBPPs with $Z=\re^p$)
Consider random SBPPs with ball conditions on $x$
and no constraints on $z$:
\begin{equation}\label{exm:random:prob:uncons}
 \left\{
	\begin{aligned}
		F^*:=\min\limits_{x\in \re^n, \, y\in \RR^{p}}&~~ F(x,y)\\
		\text{s.t.} & ~~  \|x\|^2  \leq 1, \quad
    y \in \underset{z\in \RR^p}{\tt argmin} f(x,z),
	\end{aligned}
	\right.
\end{equation}	
where $F(x,y)$ and $f(x,z)$ are generated randomly as
%
%
\begin{equation*}
\begin{aligned}
F(x,y) & := a_1^{T}[u]_{2d_1-1}+ \| B_1[u]^{d_1} \|^2,\\
f(x,z) & := a_2^{T} [x]_{2d_2-1}+ a_3^T [z]_{2d_2-1} +
\left \| B_2 \bpm [x]^{d_2} \\ [z]^{d_2} \epm  \right\|^2.
\end{aligned}
\end{equation*}
In the above, $x=(x_1,\ldots,x_n),~y=(y_1,\ldots,y_p),~ z= (z_1,\ldots,z_p)$,
$u=(x,y)$ and $d_1, d_2\in \N$.
The symbol $[x]_d$ denotes the vector of monomials in $x$
and of degrees $\leq d$, while $[x]^d$ denotes the vector of monomials
in $x$ and of degrees equal to $d$.
The symbols $[y]_d, [y]^d, [u]^d$ are defined in the same way.

\begin{table}[htb]
\caption{Results for random SBPPs
	as in \reff{exm:random:prob:uncons} }.
\label{Table:random:SBPP:X:uncons}
\centering
	\begin{scriptsize}
		\begin{tabular}{|c|c|c|c|c|c|c|c|c|c|r|r|r|} \hline
			\multirow{2}{*}{$n$ }	& \multirow{2}{*}{$p$}  & \multirow{2}{*}{ $d_1$} &  \multirow{2}{*}{$d_2$} &  \multicolumn{3}{c|}{{\tt Iter} }  & \multicolumn{3}{c|}{{\tt Time} }   &  \multicolumn{3}{c|}{{\tt $v^*$}}   \\ \cline{5-13}	
			& & & & {\tt Min} & {\tt Avg} & {\tt Max} & {\tt Min}& {\tt Avg} & {\tt Max} & {\tt Min\quad} & {\tt Avg\quad ～ }& {\tt Max\quad }     \\ 	\hline
2 & 3 & 3  & 2 & 1 & 1.9 & 5 & 00:01 & 00:02 & 00:06 & -3.8e-6 & -2.9e-7 & -4.32e-8   \\ \hline
3 & 3 & 2  & 2 & 1& 1.6 & 2 & 00:04 & 00:07 & 00:09 & -4.0e-6 & -3.7e-7 &  -1.1e-10 \\ \hline
3 & 3 & 3  & 2 & 1 & 1.7 & 2& 00:04 & 00:07 & 00:10 & -2.0e-6  & -2.6e-7  &  -7.4e-11     \\ \hline
4 & 2 & 2  & 2 & 1 &  1.4 & 3 & 00:04 & 00:06 & 00:09 &  -3.0e-6& -2.4e-7 & -4.9e-12 \\ \hline
4 & 3 & 2  & 2 & 1 & 2.3 & 5 & 00:15 & 00:41 & 01:36 &  -5.3e-6 & -6.4e-7 &  -4.67e-9 \\ \hline
5 & 2 & 2  & 2 & 1 & 1.9 & 4 & 00:14 & 00:33 & 01:13 & -3.5e-6  & -8.1e-7 & -4.3e-11\\ \hline
5 & 3 & 2  & 2 & 1 &  1.8 & 3 & 06:30 & 10:04 & 11:56  & -1.1e-6 & -3.8e-7 & -1.9e-10 \\ \hline
6 & 2 & 2  & 2 & 1 & 2.0 & 4 & 04:02 & 09:56 & 17:39 & -6.2e-6 & -1.5e-6 & -5.57e-7 \\ \hline
\end{tabular}
	\end{scriptsize}
\end{table}

%
%
We test the performance of Algorithm~\ref{alg:exchange:method}
for solving SBPPs in the form~\reff{exm:random:prob:uncons}.
The computational results are reported in Table~\ref{Table:random:SBPP:X:uncons}.
In the table, we randomly generated 20 instances for each case.
{\tt AvgIter} denotes the average number of iterations
taken by Algorithm~\ref{alg:exchange:method},
{\tt AvgTime} denotes the average of consumed time,
and {\tt Avg($v^*$)} denotes the average of the values $v^*$.
The consumed computational time is in the format {\tt mn:sc},
with mn and sc standing for minutes and seconds respectively.
As we can see, these SBPPs were solved successfully.
In Table~\ref{Table:random:SBPP:X:uncons}, the computational time
in the last two rows are much bigger than those in the previous rows.
This is because the newly added Jacobian equation
$\psi(x,y)=0$ has more polynomials and has higher degrees. Consequently,
in order to solve $(P_k)$ and $(Q_i^k)$ globally by Lasserre relaxations,
the relaxation orders need to be higher.
This makes the semidefinite relaxations more difficult to solve.

\end{exm}

\begin{exm} (Random SBPPs with ball conditions)
Consider the SBPP:
\be \label{exm:random:prob:2}
\left\{
\begin{aligned}
\min\limits_{x\in \re^n, \,y\in \re^p}&~~ F(x,y)\\
\text{s.t.} & ~~ \|x\|^2 \leq  1,
 \quad y \in  \underset{ \|z\|^2 \leq 1}{\tt argmin}
f(x,z).
\end{aligned}
\right.
\ee
The outer and inner objectives $F(x,y)$, $f(x,z)$ are generated as
\[
F(x,y) = a^{T}[(x,y)]_{2d_1}, \quad
f(x,z)=  \bpm [x]_{d_2} \\ [z]_{d_2} \epm^T  B
\bpm [x]_{d_2} \\ [z]_{d_2} \epm.
\]
The entries of the vector $a$ and matrix $B$ are generated randomly
obeying Gaussian distributions. The symbols like
$[(x,y)]_{2d_1}$ are defined similarly as in Example~\ref{exm:sbpp:Z=R^p}.
We apply Algorithm~\ref{alg:exchange:method} to solve \reff{exm:random:prob:2}.
The computational results are reported in Table~\ref{Table:random:SBPP:X:exchange}.
The meanings of {\tt Inst}, {\tt AvgIter}, {\tt AvgTime},
and {\tt Avg($v^*$)} are same as in Example~\ref{exm:sbpp:Z=R^p}.
As we can see, the SBPPs as in \reff{exm:random:prob:2}
can be solved successfully by Algorithm~\ref{alg:exchange:method}.

\begin{table}[htb]
	\caption{Results for random SBPPs in
		\reff{exm:random:prob:2}. }
	\label{Table:random:SBPP:X:exchange}
	\centering
	\begin{scriptsize}
		\begin{tabular}{|c|c|c|c|c|c|c|c|c|c|r|r|r|} \hline
			\multirow{2}{*}{$n$ }	& \multirow{2}{*}{$p$}  & \multirow{2}{*}{ $d_1$} &  \multirow{2}{*}{$d_2$} &  \multicolumn{3}{c|}{{\tt Iter} }  & \multicolumn{3}{c|}{{\tt Time} }   &  \multicolumn{3}{c|}{{\tt $v^*$}}   \\ \cline{5-13}	
			& & & & {\tt Min} & {\tt Avg} & {\tt Max} & {\tt Min}& {\tt Avg} & {\tt Max} & {\tt Min\quad} & {\tt Avg\quad ～ }& {\tt Max\quad }     \\ 	\hline
			3 & 2 & 2 & 2 & 1 & 2.6 & 6 &  00:01 & 00:03 & 00:06 & -7.4e-7 & -1.4e-7 & 2.0e-9      \\ \hline
			3 & 3 & 2 & 2 & 1 & 2.7 & 6 & 00:03 & 00:09 & 00:21 &  -2.6e-6 & -6.5e-7 &-1.5e-9  \\  \hline
			3 & 3 & 3 & 2 &  1& 3.0  & 5 & 00:03 & 00:09 & 00:17 &  -2.9e-6 & -3.6e-7 & -1.1e-9 \\ \hline
			4 & 2 & 2 & 2 & 1 & 3.5 & 8 & 00:03 & 00:20 & 00:43 & -1.8e-6 & -5.0e-7 & 1.4e-9    \\ \hline
			4 & 3 & 2 & 2 & 1 & 2.6 & 5 & 00:12 & 00:31 & 01:01 & -2.9e-6 & -3.0e-7 & 1.8e-9   \\ \hline
			5 & 2 & 2 & 2 & 1 & 3.7 & 11  & 00:11 & 00:43 & 02:06   & -3.9e-6 & -1.7e-7 & -3.4e-9   \\ \hline
			5 & 2 & 3 & 2 & 1 & 3.4 & 10 & 00:10 & 00:41 & 02:15 & -3.6e-6 &  -5.4e-7 & -1.5e-9   \\
			 \hline
			6 & 2 & 2 & 2 &  1 & 2.6 & 6 & 03:21 & 09:17 &  22:41  & -4.3e-6 & -5.7e-7 & 5.8e-10    \\ \hline
			6 & 2 & 3 & 2 & 1 & 2.4 & 5 & 03:15 & 08:23 & 17:42 & -6.2e-7	& -1.5e-7	& 2.7e-10   \\ \hline
		\end{tabular}
	\end{scriptsize}
\end{table}
\end{exm}

\subsection{Examples of GBPPs}

%
%
%

\begin{exm}\label{Ex5.7}
Consider the GBPP:
	\begin{equation} \label{exm:GBPP:1}
	\left\{
	\baray{rl}
	\min\limits_{x\in \RR^2,y\in \RR^3}  &    \frac{1}{2}x_1^2y_1+x_2y_2^2-(x_1+x_2^2)y_3\\
	\text{s.t.} &   x\in [-1,1]^2, \,\, x_1+x_2-x_1^2-y_1^2-y_2^2\geq 0,\\
	&  y\in S(x), \\
	\earay
	\right.
	\end{equation}
	where $S(x)$ is the set of minimizers of
	\[
	\left\{ \baray{rl}
	\min\limits_{z\in \RR^3} & x_2(z_1z_2z_3+z_2^2-z_3^3)\\
	\text{s.t.}~& x_1-z_1^2-z_2^2-z_3^2\geq 0,\,\,  1-2z_2z_3\geq 0.
	\earay \right.
	\]
We apply Algorithm \ref{alg:exchange:method} to solve \reff{exm:GBPP:1}. Algorithm~\ref{alg:exchange:method} terminates at the iteration $k=0$.
It takes about 10.18 seconds to solve the problem. We get
	\[
	x^* \approx (1,1), \quad y^* \approx (0,0,1),
	\quad F_0^* \approx -2, \quad v^* \approx -2.95 \times 10^{-8}.
	\]
%
%
Since $Z_0 = \emptyset$,
we have $F_0^* \leq F^*$ (the global minimum value). Moreover,
$(x^*, y^*)$ is feasible for \reff{exm:GBPP:1}, so $F(x^*, y^*) \geq F^*$.
Therefore, $F(x^*, y^*) = F^*$ and $(x^*,y^*)$ is a global optimizer,
up to a tolerance around $10^{-8}$.
\end{exm}

\begin{exm}
 	Consider the GBPP:
 	\begin{equation}\label{exm:bigger:GBPP}
 	\left\{
 	\baray{rl}
 	\min\limits_{x\in \RR^4,y\in \RR^4}  &   (x_1+x_2+x_3+x_4)(y_1+y_2+y_3+y_4)\\
 	\text{s.t.} &   \|x\|^2  \leq 1, \,\,  y_3^2-x_4\leq 0,\\
 	&\ y_2y_4-x_1\leq 0, \,\,  y\in S(x), \\
 	\earay
 	\right.
 	\end{equation}
 	where $S(x)$ is the set of minimizers of
 	\[
 	\left\{ \baray{rl}
 	\min\limits_{z\in \RR^4} & x_1z_1+x_2z_2+0.1z_3+0.5z_4-z_3z_4\\
 	\text{s.t.}~& z_1^2+2z_2^2+3z_3^2+4z_4^2\leq x_1^2+x_3^2+x_2+x_4,\\
 	& z_2z_3-z_1z_4\geq 0.
 	\earay \right.
 	\]
We apply Algorithm \ref{alg:exchange:method} to solve \reff{exm:bigger:GBPP}.
The computational results are reported in Table~\ref{table:exm:bigger:GBPP}.
Algorithm \ref{alg:exchange:method} stops with $k=1$.
It takes about 490.65 seconds to solve the problem.
We are not sure whether the point
$(x_i^k, y_i^k)$ computed at $k=1$ is a global optimizer or not.
\begin{table}[htb]
	\caption{Results of Algorithm \ref{alg:exchange:method} for solving \reff{exm:bigger:GBPP}.}
	\label{table:exm:bigger:GBPP}
	\centering
	\begin{scriptsize}
		\begin{tabular}{|c|c|c|c|c|c|c|c|} \hline
	{\tt Iter} $k$   &   $(x_i^k,y_i^k)$  &   $F_k^*$ &  $v_i^k$   \\ 	\hline
		0  & (0.5442,0.4682,0.4904,0.4942,-0.7792,-0.5034,-0.2871,-0.1855)  &  -3.5050 & -0.0391  \\
		\hline
		1  &  (0.5135,0.5050,0.4882,0.4929,-0.8346,-0.4104,-0.2106,-0.2887) &   -3.4880 & 3.29e-9  \\
			\hline
		\end{tabular}
	\end{scriptsize}
\end{table}
\end{exm}

\begin{table} [htb]
\caption{Results for some GBPPs} \label{GBPP:small}
	\centering \begin{scriptsize}
		\renewcommand{\arraystretch}{1.3}
		\begin{tabular}{|c|l|c|c|}
			\hline
			No. &
			 \quad \quad \quad \quad \quad\quad \quad \quad \quad\quad \quad \quad  \quad Small GBPPs &  \multicolumn{2}{|c| }{Results} \\
			\hline
			\multirow{6}{*}{ 1}	    &  \multirow{6}{*}{ $ \left\{
				\begin{array}{rl}  \min\limits_{x\in \RR,y\in \RR} & -x-y \\
				\text{s.t.} &  y \in S(x) := \underset{z\in Z(x)}{\tt argmin}~ z\\
				& Z(x):=\{z\in \RR| -x+z\geq 0,\, -z\geq 0\}.
				\end{array}   \right. $ } & $F^*$ & -2.78e-13 \\ \cline{3-4}
			&     & Iter & 1\\ \cline{3-4}
			&     & $x^*$& 3.82e-14 \\ \cline{3-4}
			&     & $y^*$& 2.40e-13 \\  \cline{3-4}
			&     & $v^*$ & -7.43e-13 \\ \cline{3-4}
			&    &  {\tt Time} &  0.19  \\
			\hline
			\multirow{6}{*}{ 2}	  &  \multirow{6}{*}{ $\left\{
				\begin{array}{rl} 	\min\limits_{x\in \RR,y \in \RR}& \quad (x-1)^2+y^2\\
				\text{s.t.} &  x\in [-3,2],\, y \in S(x) := \underset{z\in Z(x)}{\tt argmin}~z^3-3z\\
				& Z(x):=\{z\in \RR|z\geq x\}.
				\end{array}   \right. $} & $F^*$ & 0.9999 \\ \cline{3-4}
			&	 & Iter & 2\\ \cline{3-4}
			&	 & $x^*$& 0.9996\\ \cline{3-4}
			&	 & $y^*$& 1.0000  \\  \cline{3-4}
			&	 & $v^*$ & -4.24e-9\\ \cline{3-4}
			&  & {\tt Time}  &  0.57 \\
			\hline
			\multirow{6}{*}{ 3}	   &  \multirow{6}{*}{ $ \left\{
				\begin{array}{rl} \min\limits_{x\in \RR,y\in \RR}&  (x-0.6)^2+y^2\\
				\text{s.t.}  &  x,y \in [-1,1],\, y \in S(x) := \underset{z\in Z(x)}{\tt argmin}~f(x,z)=z^4+\frac{4}{30}(1-x)z^3\\
				&  +(0.16x-0.02x^2-0.4)z^2+(0.004x^3-0.036x^2+0.08x)z,\\
				& Z(x):=\{z\in \RR|0.01(1+x^2)\leq z^2,\, z\in [-1,1] \}.
				\end{array}   \right. $  } & $F^*$ & 0.1917 \\ \cline{3-4}
			&	 & Iter & 2\\ \cline{3-4}
			&	 & $x^*$& 0.6436\\ \cline{3-4}
			&	 & $y^*$& -0.4356  \\  \cline{3-4}
			&	 & $v^*$ & 2.18e-10\\ \cline{3-4}
			&	 & {\tt Time} &  0.52  \\
			\hline 			
			\multirow{6}{*}{4}	  &  \multirow{6}{*}{ $ \left\{
				\begin{array}{rl} \min\limits_{x\in \RR,y\in \RR^2}& \quad x^3y_1+y_2\\
				\text{s.t.} & x\in [0,1],\,y\in [-1,1]\times [0,100],\, y \in S(x) := \underset{z\in Z(x)}{\tt argmin} -z_2\\
				& Z(x):=\{z\in \RR^2|xz_1\leq 10,\, z_1^2+xz_2\leq 1, z\in [-1,1]\times [0,100] \}.
				\end{array}   \right. $  } & $F^*$ & 1 \\ \cline{3-4}
			&	 & Iter & 1\\ \cline{3-4}
			&	  & $x^*$& 1 \\ \cline{3-4}
			&	  & $y^*$& (0,1)  \\  \cline{3-4}
			&	  & $v^*$ & 3.45e-8\\ \cline{3-4}
			&	  & {\tt Time}  &  1.83 \\
			\hline
			\multirow{6}{*}{ 5}	 &        \multirow{6}{*}{ $\left\{
				\begin{array}{rl} 	\min\limits_{x\in \RR,y\in \RR^2}&  -x-3y_1+2y_2\\
				\text{s.t.} &  x\in [0,8],  y\in [0,4]\times [0,6],\, y\in S(x) = \underset{z\in Z(x)}{\tt argmin}
				-z_1\\
				& Z(x):=\left\{z\in \RR^2\left|\baray{c}-2x+z_1+4z_2\leq 16, 8x+3z_1-2z_2\leq 48\\
				2x-z_1+3z_2\geq 12,z\in [0,4]\times [0,6] \earay
				\right.  \right\}.
				\end{array}   \right. $ } & $F^*$ & -13 \\ \cline{3-4}
			&	  & Iter & 1\\ \cline{3-4}
			&	 		  & $x^*$& 5 \\ \cline{3-4}
			&  & $y^*$& (4,2)  \\  \cline{3-4}
			&			  & $v^*$ & 3.95e-6 \\ \cline{3-4}
			&			  & {\tt Time}  &  0.38 \\
			\hline
			\multirow{6}{*}{ 6}	   &  \multirow{6}{*}{ $ \left\{
				\begin{array}{rl} 	\min\limits_{x \in\re^2 ,y\in\re^2 }&\ -y_2\\
				\text{s.t.}&\  y_1y_2=0,x\geq 0,\, y\in S(x):= \underset{z\in Z(x)}{\tt argmin} ~z_1^2+(z_2+1)^2 \\
				&\ Z(x):=\left\{z\in \RR^2\left|\baray{c}(z_1-x_1)^2+(z_2-1-x_1)^2\leq 1,\\
				(z_1+x_2)^2+(z_2-1-x_2)^2\leq 1\earay
				\right. \right\}.  \end{array}
				\right. $ } & $F^*$ & -1 \\ \cline{3-4}
			&  & Iter & 2\\ \cline{3-4}
			&  & $x^*$& (0.71,0.71) \\ \cline{3-4}
			&  & $y^*$& (0,1) \\  \cline{3-4}
			&  & $v^*$ & -3.77e-10\\ \cline{3-4}
			&  & {\tt Time}  & 0.60 \\
			\hline	 		
			\multirow{6}{*}{7}	  	  &  \multirow{6}{*}{ $   \left\{
				\begin{array}{rl} 	\min\limits_{x\in \RR^2,y\in \RR^2}&  -x_1^2-3x_2-4y_1+y_2^2\\
				\text{s.t.} &  (x,y) \geq 0, {-x_1^2}-2x_2+4\geq 0,  y\in S(x):= \underset{z\in Z(x)}{\tt argmin}
				~z_1^2-5z_2\\
				& Z(x):=\left\{z\in \RR^2\left|\baray{c}~x_1^2-2x_1+x_2^2-2z_1+z_2 +3\geq 0,\\
				x_2+3z_1-4z_2-4\geq 0\earay
				\right\}. \right.
				\end{array}   \right. $  } & $F^*$ & -12.6787 \\ \cline{3-4}
			&	 & Iter & 2\\ \cline{3-4}
			&	  & $x^*$& (0,2) \\ \cline{3-4}
			&	  & $y^*$& (1.88,0.91) \\  \cline{3-4}
			&	  & $v^*$ &  2.40e-6 \\  \cline{3-4}
			&	  & {\tt Time}  &   10.52 \\ \hline	
		\end{tabular}
	\end{scriptsize}
\end{table}

\begin{exm}
In this example we consider some GBPP examples given in the literature. The problems and the computational results are displayed in Table \ref{GBPP:small}.
Problem  1 is    \cite[Example 3.1]{allende2013solving} and the optimal solution $(x^*,y^*)=(0,0)$ is reported.
Problem 2 is   \cite[Example 4.2]{YeSIAM2010} and the optimal solution
$(x^*,y^*)=(1,1)$ is reported.
Problem~3 is \cite[Example 3.22]{BIEXM}.
{As shown in \cite{BIEXM},}
the optimal solution should attain at a point satisfying $0< x<1$ and
$y=-0.5+0.1x$. For $(x,y)$ satisfying these conditions, the lower level constraint $0.01(1+x^2)-y^2\leq 0$ becomes inactive. Plugging $y=-0.5+0.1x$ into the upper level objective,  the bilevel program becomes finding the minimum of the convex  function
$(x-0.6)^2+(-0.5+0.1x)^2$. Hence the optimal solution is
$(x^*,y^*)=(\frac{65}{101}, \frac{44}{101})$.
Problem 4 can be found in \cite[Example 4.2]{BIEXM}
with the optimal solution $(x^*,y^*)=(1,0,1)$ reported.
Problem 5 can be found in  \cite[Example 5.1]{BIEXM} where
the optimal solution $(x^*,y^*)=(5,4,2)$ is derived.
Problem 6 is \cite[Example 3.1]{Dempe2012MP}.
{As shown in \cite{Dempe2012MP},}
the optimal solution is $(x^*,y^*)=(\sqrt{0.5}, \sqrt{0.5})$.
Problem 7 was originally given  in \cite[Example 3]{Bard}
and analyzed in \cite{allende2013solving}.
It was reported in \cite{allende2013solving} that the optimal solution is
$x^*=(0,2), y^* \approx (1.875, 0.9062)$.
In fact we can show that the optimal solution is
$x^*=(0,2), y^*=(\frac{15}{8}, \frac{29}{32})$ as follows.
Since the upper objective is separable in $x$ and $y$, it is easy to show that the optimal solution for
the problem
$$\min_{(x_1,x_2)\geq 0} -x_1^2-3x_2-4y_1+y_2^2
\quad \mbox{ s.t. } \quad -x_1^2-2x_2+4 \geq 0$$
with $y_1,y_2$ fixed
is $x^*_1=0, x_2^*=2$. Since $y^*=(\frac{15}{8}, \frac{29}{32})$ is the optimal solution to the lower level problem parameterized by $x^*=(0,2)$, we conclude that the optimal solution is $x^*=(0,2), y^*=(\frac{15}{8}, \frac{29}{32})$.
From Table \ref{GBPP:small},  we can see that Algorithm \ref{alg:exchange:method} {stops} in very few steps with
 global optimal solutions for all  problems.
\end{exm}

\section{Conclusions and discussions}
\label{sc:condis}

This paper studies how to solve
both simple and general bilevel polynomial programs.
We reformulate them as equivalent semi-infinite polynomial programs,
using Fritz John conditions and Jacobian representations.
Then we apply the exchange technique and Lasserre type
semidefinite relaxations to solve them.
For solving SBPPs, we proposed Algorithm~\ref{alg:exchange:method}
and proved its {convergence}
to global optimal solutions.
For solving GBPPs, {Algorithm~\ref{alg:exchange:method}
can also be applied},
but its convergence to global optimizers is not guaranteed.
However, under some assumptions,
GBPPs can also be solved globally by Algorithm~\ref{alg:exchange:method}.
Extensive numerical experiments are provided
to demonstrate the efficiency of the proposed method.
{To see the advantages of our method,
we would like to make some comparisons with
two existing methods for solving bilevel polynomial programs.
The first one is the value function approximation approach
proposed by Jeyakumar, Lasserre, Li and Pham \cite{Lasbilevel2015};
the second one is the branch and bound approach proposed by
Mitsos, Lemonidis and Barton \cite{mitsos2008global}.
}

\subsection{Comparison with the value function approximation approach}

For solving SBPPs with convex lower level programs,
a semidefinite relaxation method was proposed in
\cite[\S3]{Lasbilevel2015},
under the assumption that the lower level programs satisfy
both the nondegeneracy condition and the Slater condition.
It uses multipliers, appearing in the Fritz John conditions,
as new variables in sum-of-squares type representations.
For SBPPs with nonconvex lower level programs,
it was proposed in \cite[\S4]{Lasbilevel2015}
to solve the following  $\epsilon$-approximation problem
(for a tolerance parameter $\eps>0$)
\be \label{bilevel:pp:eps}
(P^k_{\epsilon}): \left\{
\begin{aligned}
	F_{\epsilon}^{k} := \min\limits_{x\in \re^n,y\in \re^p}&\ F(x,y) \\
	\text{s.t.} \quad &\ G_i(x,y)\geq 0, \, i=1,\cdots,m_1, \\
	&\ g_j(y)\geq 0, j=1,\dots, m_2,\\
	& \  f(x,y)-J_{k}(x)\leq \epsilon.
\end{aligned}
\right.
\ee
In the above, $J_{k}(x)\in \RR_{2k}[x]$ is a $\frac{1}{k}$-solution
for approximating the nonsmooth value function
$v(x)$ \cite[Algorithm~4.5]{Lasbilevel2015}. For a given parameter $\epsilon >0$,
the method in \cite[\S4]{Lasbilevel2015}
finds the approximating polynomial $J_k(x)$ first,
and then solves $(P^k_{\epsilon})$ by Lasserre type semidefinite relaxations.
Theoretically, $\eps>0$ can be chosen as small as possible.
However, in computational practice, when $\eps>0$ is very small,
the degree $2k$ need to be chosen very high
and then it is hard to compute $J_k(x)$.
In the following, we give an example to compare our
Algorithm~\ref{alg:exchange:method} and the method in \cite[\S4]{Lasbilevel2015}.

%
%
%

\begin{exm} Consider the following SBPP:
	\begin{equation}\label{exm:comparsion:3}
	\left\{
	\begin{aligned}
	F^*:=\min\limits_{x\in \RR^2,y\in \RR^2}&\ y_1^3(x_1^2-3x_1x_2)-y_1^2y_2+y_2x_2^3\\	\quad \text{s.t.}&\ ~x\in [-1,1]^2,\ y_2+y_1(1-x_1^2)\geq 0,\ y\in S(x),
	\end{aligned}
	\right.
	\end{equation}
	where $S(x)$ is the solution set of the following optimization problem:
	\begin{equation*}
	v(x):=	\min\limits_{z\in \RR^2}~z_1z^2_2-z^3_2-z^2_1(x_2-x_1^2)\quad\text{s.t.}\quad z_1^2+z_2^2\leq 1.
	\end{equation*}	
The computational results of applying Algorithm~\ref{alg:exchange:method}
is shown in Table \ref{table:exm:comparsion:3}. It took only two steps
to solve the problem successfully. The set $\mathcal{U}$ is compact.
For each $x$, $S(x)\neq \emptyset$, since the lower level program is defined as a polynomial over a compact set. The value function $v(x)$ of lower level program is continuous. The feasible set of problem \reff{exm:comparsion:3}
is nonempty and compact.
%
%
{
At the iteration $k=1$, the value $v_i^k$ is almost zero,
so the point $(0.5708,-1.0000,-0.1639,0.9865)$
is a global optimizer of problem \reff{exm:comparsion:3},
up to a tolerance around $10^{-9}$.
}

\begin{table}[htb]
\caption{Computational results of Algorithm \ref{alg:exchange:method} for solving  \reff{exm:comparsion:3}.}
\label{table:exm:comparsion:3}
		\centering
\begin{scriptsize}
\begin{tabular}{|c|c|c|c|c|c|c|c|} \hline
{\tt Iter} $k$   &   $(x_i^k,y_i^k)$  & {$z_{i,j}^k$} & $F_k^*$ &  $v_i^k$   \\ 	\hline
0  & ( 1.0000,-1.0000,-1.0000,0.0000)  & (-0.1355,0.9908)& -4.0000 & -3.0689\\
				& (-1.0000,1.0000,-1.0000,0.0000)&(-0.2703,0.9628) & -4.0000
				& -1.1430 \\ 	\hline
1  & (0.5708,-1.0000,-0.1639,0.9865) & (-0.1638,0.9865) &-1.0219& -4.76e-9  \\ 	\hline
\end{tabular}
\end{scriptsize}
\end{table}

Next, we apply the method in \cite[\S4]{Lasbilevel2015}.
We use the software {\sf Yalmip} \cite{YALMIP}
to compute the approximating polynomial $J_k(x)\in\RR_{2k}[x]$,
as in \cite[Algorithm~4.5]{Lasbilevel2015}. After that,
we solve the problem {$(P_{\epsilon}^{k})$} by Lasserre type semidefinite relaxations,
for a parameter $\epsilon > 0$. Let $F_{\epsilon}^{k}$ denote
the optimal value of \reff{bilevel:pp:eps}.
The computational results are shown in Table \ref{table:exm:comparsion:3:Las}.
As $\epsilon$ is close to $0$, we can see that $F_{\epsilon}^{k}$
is close to the true optimal value $F^* \approx -1.0219$.
Since the method in \cite{Lasbilevel2015} depends on the choice of $\eps>0$,
we do not compare the computational time.
In applications, the optimal value $F^*$ is typically unknown.
An interesting question for research
is how to select a value of $\eps>0$
that guarantees $F_{\epsilon}$ is close enough to $F^*$.

\begin{table}[htb]
\caption{Computational results of the method in \cite[\S4]{Lasbilevel2015}.}
		\label{table:exm:comparsion:3:Las}
		\centering
		\begin{scriptsize}
			\begin{tabular}{|l|c|c|c|c|c|c|c|} \hline
			\quad 	$\epsilon$   &  $F_{\epsilon}^{2}$  & $F_{\epsilon}^{3}$   & $F_{\epsilon}^{4}$     \\ 	\hline
				1.0  & -3.4372 & -3.6423 & -3.6439 \\
				0.5  & -1.5506 &  -1.5909 & -1.5912
				\\
				0.25  &  -1.2718  & -1.2746 &  -1.2750   \\
				0.125  & -1.1746  & -1.1775 &  -1.1779 \\
				0.05  & -1.1193  & -1.1224 & -1.1228 \\
				0.01  &  -1.0897  & -1.0930 & -1.0934  \\
				0.005  & -1.0858 &  -1.0892 & -1.0897 \\
				0.001  &  -1.0827 &  -1.0862 & -1.0867 \\
				0.0001  & -1.0820   &  -1.0855 & -1.0860 \\
				\hline
			\end{tabular}
		\end{scriptsize}
	\end{table}

%
%

\end{exm}

\subsection{Comparison with the branch and bound approach}

Mitsos, Lemonidis and Barton \cite{mitsos2008global} proposed
a bounding algorithm for solving bilevel programs,
in combination with the exchange technique.
It works on finding a point that satisfies $\eps$-optimality
in the inner and outer programs. For the lower bounding algorithm,
a relaxed program {needs} to be solved globally.
The optional upper bounding problem is based on probing
the solution obtained by the lower bounding procedure.
The algorithm can be extended to use branching techniques.
For cleanness of the paper, we do not repeat the details here.
Interested readers are referred to \cite{mitsos2008global}.
We list some major differences between the method in our paper
and the one in \cite{mitsos2008global}.
\bit

\item The method in \cite{mitsos2008global} is based on
building a tree of nodes of subproblems,
obtained by partitioning box constraints for the variables $x,y$.
Our method does not need to build such a tree of nodes
and does not require box constraints for partitioning.

\item For each subproblem in the lower/upper bounding,
a nonlinear nonconvex optimization,
or a mixed integer nonlinear nonconvex optimization,
need to be solved globally or with $\eps$-optimality.
The software {\tt GMAS} \cite{GAMSManual} and {\tt BARON} \cite{BARON}
are applied to solve them.
In contrast, our method does not solve these nonlinear nonconvex subproblems
by  {\tt BARON} and {\tt GMAS}. Instead, we solve them globally
by Lasserre type semidefinite relaxations, which are convex programs
and can be solved efficiently by a standard
SDP package like {\tt SeDuMi}. In our computational experiments,
the subproblems are all solved globally
by {\tt GloptiPoly~3} \cite{Gloptipoly} and {\tt SeDuMi} \cite{sedumi}.

\eit

In \cite{mitsos2008global}, the branch and bound method was implemented
in {\tt C++}, and the subproblems were solved by {\tt BARON} and {\tt GMAS}.
In our paper, the method is implemented
in {\tt MATLAB}, the subproblems are solved by
{\tt GolptiPoly~3} and {\tt SeDuMi}.
Their approaches and implementations are very different.
It is hard to find a good way to compare them directly.
However, for BPPs, the subproblems in \cite{mitsos2008global}
and in our paper are all polynomial optimization problems.
To compare the two methods, it is reasonably well to compare
the number of subproblems that are needed to be solved,
although this may not be the best way.

We choose the seven SBPPs in Example~\ref{example:mitsos},
which were also in \cite{mitsos2008global}.
The numbers of subproblems are listed in
Table~\ref{table:exm:comparsion:B&b}.
In the table, {\sf B \& B (I)} is the branch and bound method
in \cite{mitsos2008global} without branching;
{\sf B \& B (II)} is the branch and bound method in \cite{mitsos2008global}
with branching; \#LBD is the number of lower bounding subproblems;
\#UBD is the number of upper bounding subproblems;
\#L-POP is the number of subproblems $(P_k)$ needs to be solved in
Algorithm \ref{alg:exchange:method}; \#U-POP is the number of subproblems
$(Q_i^k)$ needs to be solved in Algorithm~\ref{alg:exchange:method}.
The number of variables in lower bounding subproblems for
branch and bound methods (I/II) and subproblem $(P_k)$ for
Algorithm \ref{alg:exchange:method} are the same, all equal to $n+p$;
and the number of variables in upper bounding subproblems
for branch and bound methods (I/II) and subproblem $(Q_i^k)$
for Algorithm~\ref{alg:exchange:method} are the same,
all equal to $p$. For problem Ex.~3.16, since the subproblem
$(P_k)$ has two optimal solutions, so we need to
solve two subproblems {$(Q_i^k)$} to check if they are both
global optimal solutions.
From Table~\ref{table:exm:comparsion:B&b},
one can see that Algorithm~\ref{alg:exchange:method} has a
smaller number of subproblems that need to be solved.
If all the subproblems are solved by the same method,
Algorithm~\ref{alg:exchange:method} is expected to be more efficient.

%
%
%

\begin{table}[htb]
\caption{{ A comparison of the numbers of
polynomial optimization subproblems in \cite{mitsos2008global}
and in Algorithm \ref{alg:exchange:method}}.}
	\label{table:exm:comparsion:B&b}
	\centering
	\begin{scriptsize}
		\begin{tabular}{|c||c|c||c|c||c|c|c|} \hline
\multirow{2}{*}{ Problem}   	& \multicolumn{2}{c||}{{\sf B \& B (I)}} & \multicolumn{2}{c||}{{\sf B \& B (II)}} & \multicolumn{2}{c|}{Alg.~\ref{alg:exchange:method}} \\ \cline{2-7}		
 & \#LBD  &   \#UBD   &  \#LBD   &   \#UBD & \#L-POP & \#U-POP   \\ 	\hline
 Ex. 3.14 & 4 & 3 & 7 & 3 & 2 & 2\\  \hline
 Ex. 3.15 & 2 & 1 & 3 & 1 & 2 & 2 \\  \hline	
 Ex. 3.16 & 2 & 1 & 3 & 1 & 2 & 3 \\  \hline
 Ex. 3.17 & 19 & 18 & 37 & 18 & 4& 4\\  \hline	
 Ex. 3.18 & 2 & 2 & 3 & 2 & 2 & 2 \\  \hline	
 Ex. 3.19 & 13 & 12 & 27 & 14 & 2 & 2\\  \hline
 Ex. 3.20 & 4 & 3 & 5 & 3 & 2& 2 \\  \hline
		\end{tabular}
	\end{scriptsize}
\end{table}

\bigskip
\noindent
{\bf Acknowledgement}
Jiawang Nie was partially supported by the NSF grants
DMS-0844775 and DMS-1417985.
Jane Ye's work was supported  by the NSERC.
The authors would like to thank the anonymous reviewers
for the careful review and valuable comments that
helped improve the presentation of the manuscript.

\end{document}